\documentclass[12pt,reqno]{amsart}
\usepackage{amssymb,amsmath,amsthm,enumerate,bbm}
\usepackage[a4paper]{geometry}

\sloppy

%%%%%%%%%%%%%%%%%%% OPERATOR NAMES AND SUCH %%%%%%%%%%%%%%%%%%%%%%%

\DeclareMathOperator{\Ran}{Ran}
\DeclareMathOperator{\Dom}{Dom}
\DeclareMathOperator{\Ker}{Ker}

\DeclareMathOperator{\rank}{rank}
\DeclareMathOperator{\BMOA}{BMOA}
\DeclareMathOperator{\Graph}{Graph}

\renewcommand\Im{\hbox{{\rm Im}}\,}
\renewcommand\Re{\hbox{{\rm Re}}\,}
\newcommand{\abs}[1]{\lvert#1\rvert}

\newcommand{\norm}[1]{\lVert#1\rVert}

\newcommand{\jap}[1]{\langle#1\rangle}

%%%%%%%%%%%%%%%%%% BOLD AND CALLIGRAPHIC LETTERS %%%%%%%%%%%%%%
\newcommand{\bbT}{{\mathbb T}}
\newcommand{\bbR}{{\mathbb R}}

\newcommand{\bbD}{{\mathbb D}}
\newcommand{\bbP}{\mathbb {P}}

\newcommand{\calH}{{\mathcal H}}
\newcommand{\calP}{\mathcal{P}}

%%%%%%%%%%%%%%%% EQUATIONS %%%%%%%%%%%%%%%%%%%%%

\numberwithin{equation}{section}

%%%%%%%%%%%%%%%% THEOREM ENVIRONMENTS %%%%%%%%%%%%%%%%%%

\theoremstyle{plain}
\newtheorem{theorem}{\bf Theorem}[section]
\newtheorem*{theorem*}{Theorem 1.1$'$}
\newtheorem{lemma}[theorem]{\bf Lemma}
\newtheorem{proposition}[theorem]{\bf Proposition}
\newtheorem{corollary}[theorem]{\bf Corollary}

\theoremstyle{remark}
\newtheorem*{remark*}{\bf Remark}

%%%%%%% MISCELLANEOUS NOTATIONS %%%%%%%%%%%%%

\newcommand{\wt}{\widetilde}
\newcommand{\wh}{\widehat}

\renewcommand{\sharp}{\#}

\newcommand{\eps}{\varepsilon}
\newcommand{\1}{\mathbbm{1}}

%%%%%%%%%%%%%%%%%%%% END OF DEFINITIONS %%%%%%%%%%%%%%%%%%%%%%%%

\begin{document}

\title[Unbounded Hankel operators and the cubic Szeg\H{o} equation]{Unbounded Hankel operators and the flow of the cubic Szeg\H{o} equation}

\author{Patrick G\'erard}
\address{Universit\'e Paris-Saclay, Laboratoire de Math\'ematiques d'Orsay, CNRS, UMR 8628, France}
\email{patrick.gerard@universite-paris-saclay.fr}

\author{Alexander Pushnitski}
\address{Department of Mathematics, King's College London, Strand, London, WC2R~2LS, U.K.}
\email{alexander.pushnitski@kcl.ac.uk}

\subjclass[2000]{Primary 47B35, secondary 37K20}

\keywords{Cubic Szeg\H{o} equation,  evolution flow, Hankel operators}

\begin{abstract}
We prove that, for any Hankel operator with a symbol from the Hardy class $H^2$, the maximal and minimal domains coincide. As an application, we prove that the evolution flow of the cubic Szeg\H{o} equation on the unit circle can be continuously extended to the whole class $H^2$. 
\end{abstract}

%\date{November 2022}

\maketitle

%%%%%%%%%%%%%%%%%%%%%%%%%%%%%%%%%%%%%%%%%%%%%%%%%
%%%%%%%%%%%%%%%%%%%%%%%%%%%%%%%%%%%%%%%%%%%%%%%%%
\section{Introduction and main results}\label{sec.a}
%%%%%%%%%%%%%%%%%%%%%%%%%%%%%%%%%%%%%%%%%%%%%%%%%
%%%%%%%%%%%%%%%%%%%%%%%%%%%%%%%%%%%%%%%%%%%%%%%%%

%%%%%%%%%%%%%%%%%%%%%%%%%%
\subsection{Unbounded Hankel operators}\label{sec.a1}
%%%%%%%%%%%%%%%%%%%%%%%%%%
For $1\leq p\leq \infty$, let  $H^p$ be the standard Hardy class on the unit circle $\bbT$. 
Let $\bbP$ be the orthogonal projection from $L^2(\bbT)$ onto $H^2$. The inner product (linear in the first argument, anti-linear in the second one) on a Hilbert space $X$ is denoted by $\jap{\cdot,\cdot}_X$, and the corresponding norm by $\norm{\cdot}_X$.

Let $u\in H^\infty$; the Hankel operator $\Gamma_u$ with the \emph{symbol} $u$ is defined as the operator on $H^2$ given by 
\begin{equation}
\Gamma_u f=\bbP(u\cdot Jf), \quad f\in H^2, \quad Jf(z)=f(\overline{z}).
\label{a1}
\end{equation}
The matrix of $\Gamma_u$ in the standard basis $\{z^n\}_{n=0}^\infty$ is a Hankel matrix:
\begin{equation}
\jap{\Gamma_u z^n,z^m}_{H^2}=\jap{\bbP(u\overline{z}^n),z^m}_{H^2}=
\jap{u\overline{z}^n,z^m}_{L^2}=\jap{u,z^{n+m}}_{L^2}=\wh u_{n+m},
\label{a1a}
\end{equation}
where $n,m\geq0$ and $\wh u_n$ is the $n$'th Fourier coefficient of $u$. 

In this paper, we are interested in the (potentially unbounded) Hankel operators with symbols $u\in H^2$. In this case, one can still define $\Gamma_u f$ by \eqref{a1} on polynomials $f$. 
The classical Nehari-Fefferman Theorem \cite[Theorem 1.1.2]{Peller} asserts that $\Gamma_u$ extends to a bounded operator on $H^2$ if and only if $u$ is in the $\BMOA$ class. If $u$ is not in $\BMOA$, one can define a \emph{closed} unbounded operator $\Gamma_u$ in at least two \emph{a priori} different ways:
\begin{enumerate}[\rm (i)]
\item
Define $\Gamma_u$ by \eqref{a1} initially on polynomials and take the closure of this operator (it is easy to see that the closure exists);
\item
Define $\Gamma_u$ by \eqref{a1} on the maximal domain, viz.
\begin{equation}
\Dom \Gamma_u=\{f\in H^2: \Gamma_u f\in H^2\}.
\label{a2}
\end{equation}
\end{enumerate}
Here condition $\Gamma_u f\in H^2$ should be understood as follows: 
for the product $h:=u\cdot Jf\in L^1(\bbT)$, the Fourier coefficients $\wh h_n$ with $n\geq0$ are square-summable; in other words, $\Gamma_uf=\bbP(uJf)\in H^2$ with $\bbP$ taken in the sense of distributions.

Our first main result below is that the two definitions (i) and (ii) produce \emph{the same} Hankel operator $\Gamma_u$. In other words, if we define $\Gamma_u$ as in (ii), then the set of all polynomials is dense in $\Dom \Gamma_u$ with respect to the graph norm
\begin{equation}
\norm{f}_{\Gamma_u}:=\bigl(\norm{f}_{H^2}^2+\norm{\Gamma_uf}_{H^2}^2\bigr)^{1/2}.
\label{a3}
\end{equation}
We also have a description of the adjoint to $\Gamma_u$. 
First we recall the description of the adjoint to a bounded Hankel operator. 
For $f\in H^2$, we denote by $f^\sharp\in H^2$ the function $f^\sharp(z)=\overline{f(\overline{z})}$; in other words, the operation $f\mapsto f^\sharp$ is effected by the complex conjugation of all Fourier coefficients of $f$. 
From \eqref{a1a} it is clear that the adjoint of a bounded Hankel operator $\Gamma_u$ is also a Hankel operator given by
$$
\Gamma_u^*=\Gamma_{u^\sharp}.
$$

%%%%%%%%%%%%%%%%%
\begin{theorem}\label{thm.a1}
%%%%%%%%%%%%%%%%%
Let $u\in H^2$, and let $\Gamma_u$ be the Hankel operator, defined by \eqref{a1} on the domain \eqref{a2}. Then: 
\begin{enumerate}[\rm (i)]
\item
$\Gamma_u$ is closed (i.e. $\Dom \Gamma_u$ is closed with respect to the graph norm \eqref{a3});
\item 
the set of all polynomials is dense in $\Dom \Gamma_u$ with respect to the graph norm;
\item 
the adjoint of $\Gamma_u$ is given by $\Gamma_u^*=\Gamma_{u^\sharp}$, where $\Gamma_{u^\sharp}$ is defined as above with $u$ replaced by $u^\sharp$. 
\end{enumerate}
\end{theorem}

%%%%%%%%%%%%%%%%%
\begin{corollary}\label{cr.a1}
%%%%%%%%%%%%%%%%%
Let $u\in H^2$ be a symbol such that all Fourier coefficients of $u$ are real. Then $\Gamma_u$ is essentially self-adjoint on the set of polynomials. 
\end{corollary}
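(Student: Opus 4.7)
The plan is to deduce the corollary directly from parts (ii) and (iii) of Theorem \ref{thm.a1}; essentially no new work is needed once those statements are in hand.

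First I would unpack the hypothesis. Recall that the map $f\mapsto f^\sharp$ conjugates all Fourier coefficients: if $f=\sum_{n\geq 0}\wh{f}_n z^n$ then $f^\sharp=\sum_{n\geq 0}\overline{\wh{f}_n}z^n$. Consequently the condition that every Fourier coefficient of $u$ is real is exactly equivalent to the identity $u^\sharp=u$ in $H^2$. This is the only computation needed in the whole argument.

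Next I would invoke Theorem \ref{thm.a1}(iii) with $u^\sharp=u$ to conclude that the maximal-domain operator $\Gamma_u$ satisfies $\Gamma_u^*=\Gamma_{u^\sharp}=\Gamma_u$, so that $\Gamma_u$ is self-adjoint on the domain $\Dom\Gamma_u$ given by \eqref{a2}. Denote by $\Gamma_u^{\mathrm{pol}}$ the restriction of $\Gamma_u$ to the set of polynomials. Part (ii) of Theorem \ref{thm.a1} says precisely that polynomials form a core for the maximal-domain operator, i.e.\ the closure of $\Gamma_u^{\mathrm{pol}}$ in the graph norm \eqref{a3} equals $\Gamma_u$. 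Since the closure of $\Gamma_u^{\mathrm{pol}}$ is the self-adjoint operator $\Gamma_u$, the operator $\Gamma_u^{\mathrm{pol}}$ is, by definition, essentially self-adjoint. This is exactly the statement of the corollary.

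I do not expect any real obstacle: the heavy lifting lies entirely in Theorem \ref{thm.a1}. The only points worth stating explicitly in the write-up are the identification $u^\sharp=u$ under the reality hypothesis, and the standard observation that an operator whose closure is self-adjoint is essentially self-adjoint by definition.
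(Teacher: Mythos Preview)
Your proposal is correct and is exactly the intended argument: the paper states the corollary immediately after Theorem~\ref{thm.a1} without giving a separate proof, since it follows at once from parts (ii) and (iii) in the way you describe. The only content is the observation that real Fourier coefficients mean $u^\sharp=u$, whence $\Gamma_u^*=\Gamma_u$ by (iii), and then (ii) says polynomials form a core.
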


Theorem~\ref{thm.a1} is proved in Section~\ref{sec.b}.
The crucial part of the theorem is part (ii). 
Our  proof of this part is based on the following observation. The graph of $\Gamma_u$, as a subspace of $H^2\oplus H^2$, is an invariant subspace of the operator $S\oplus S^*$, where $S$ is the shift operator in $H^2$ (see Section~\ref{sec.a4}). Invariant subspaces of $S\oplus S^*$ can be characterised in the framework of the Sz.-Nagy--Foias theory of contractions; we borrow this characterisation from Timotin's paper \cite{Timotin}. This characterisation allows us to to prove that the subspace corresponding to polynomials is dense in the graph of $\Gamma_u$.

\subsection{Discussion}
Observe that $\Gamma_u\1=\bbP(u)=u$, where $\1$ is the function in $H^2$ identically equal to one. This shows that if we wish $\1$ to be in the domain of $\Gamma_u$, we must have $u\in H^2$. Thus, the class of symbols $H^2$ is optimal in this sense.

If $\Gamma_u$ is self-adjoint and positive semi-definite (in the quadratic form sense), it is possible to define $\Gamma_u$ via the corresponding quadratic form. This approach has been explored in \cite{Yafaev1,Yafaev2}.

Let us discuss three classes of operators where the question analogous to Theorem~\ref{thm.a1} is relevant.
First of all, there is a close analogy between Corollary~\ref{cr.a1} and the classical problem of self-adjointness of the Schr\"odinger operator $-\Delta+V$ on $L^2(\bbR^n)$, $n>1$ with the potential $0\leq V\in L^2(\bbR^n)$. Condition $V\in L^2$ is the minimal condition under which the expression $-\Delta f+Vf$ is in $L^2$ for all $f\in C^\infty_0(\bbR^n)$. The classical theorem due to Kato  asserts that the operator $-\Delta+V$ is essentially self-adjoint on $C^\infty_0(\bbR^n)$. This is a deep result that relies on Kato's inequality (see e.g. \cite{RS2}).

Another example is offered by the class of Jacobi matrices,
$$ 
J=   
\begin{pmatrix}
b_0& a_0 & 0 &0&\cdots
\\
a_0& b_1 & a_1&0&\cdots
\\
0&a_1&b_2&a_2&\cdots
\\
\vdots&\vdots&\vdots&\vdots&\ddots
\end{pmatrix}\ ,
$$
where $a_n$ are positive numbers and $b_n$ are real numbers. If the sequences $a_n$ and $b_n$ are bounded, $J$ is a bounded operator on $\ell ^2$. On the other hand, if at least one of the sequences $a_n$, $b_n$ is unbounded, the question arises whether $J_{\rm max}=J_{\rm min}$, where 
$J_{\rm min}$ is the closure of the operator $J$ defined on the space of finitely supported sequences, while $J_{\rm max}$ has the domain 
$$ \Dom (J_{\rm max})=\{ (x_n)\in \ell ^2: Jx \in \ell ^2   \} $$
and $Jx$ means the usual matrix product. It may happen that  $J_{\rm max}\ne J_{\rm min}$; 
this is related to the indeterminancy of the corresponding moment problem, see \cite{Simon}.

Finally, let us discuss the closely related case of Toeplitz operators on $H^2$, defined as 
$$T_\varphi f =\mathbb P (\varphi f)\ ,\ \varphi \in L^2(\bbT )\ .$$
 We will focus on the case of   real valued $\varphi $. If $\varphi $ is bounded, it is well known that $T_\varphi $ is bounded and selfadjoint on $H^2$. On the other hand, if $\varphi $ is unbounded, one can ask whether 
 $[T_{\varphi }]_{\rm max}= [T_{\varphi }]_{\rm min}$, where $[T_\varphi ]_{\rm min}$ is the closure of the operator $T_\varphi $ defined on the space of polynomials, while $[T_\varphi ]_{\rm max}$ has the domain 
$$\Dom ([T_\varphi]_{\rm max})=\{ f\in H^2: \mathbb P(\varphi f)\in L^2  \} $$
and $\mathbb P(\varphi f)$ is understood in the sense of distributions on $\bbT $. According to \cite{HaWi}, there exist unbounded real valued symbols $\varphi \in L^2(\bbT)$ such that $[T_{\varphi }]_{\rm max}\ne [T_{\varphi }]_{\rm min}$. For completeness, we recall this construction in Appendix~A.

For a general matrix $A=\{a_{jk}\}_{j,k=0}^\infty$, such that 
$$
\sum_{k=0}^\infty \abs{a_{jk}}^2<\infty \quad \forall j,
$$
one can define the maximal and minimal realisations $A_{\rm max}$ and $A_{\rm min}$ in the same spirit as above (see e.g. \cite[Example V.3.13]{Kato}) and ask whether $A_{\rm max}=A_{\rm min}$. Obviously, Jacobi matrices fit into this framework. Moreover, both Hankel and Toeplitz operators with symbols in $L^2(\bbT)$ fit in this framework as well since they can be realised as infinite matrices with respect to the standard basis $\{z^k\}_{k=0}^\infty$ of $H^2$. The above discussion demonstrates that the question of whether $A_{\rm max}=A_{\rm min}$ is extremely delicate and depends on the structure of the particular class of matrices $A$.

%%%%%%%%%%%%%%%%%%%%%%%%%%
\subsection{The flow of the cubic Szeg\H{o} equation}\label{sec.a2}
%%%%%%%%%%%%%%%%%%%%%%%%%%

The cubic Szeg\H{o} equation is the evolution equation on $H^2$
\begin{equation}
\label{a4}
i\partial_tu=\bbP(\abs{u}^2u)\ .
\end{equation}
It was introduced in \cite{GG00} where it was proved to be well-posed on the intersection of 
$H^2$ with the Sobolev space  $W^{s,2}(\bbT )$, for every $s\geq \frac 12$. 
The importance of this equation, in particular, stems from the fact that it displays long time transition of energy to high frequencies, expressed via the growth of high regularity Sobolev norms \cite[Theorem 1.0.1]{GG17}. 
More recently, the wellposedness was extended to $\BMOA(\bbT )$ in \cite{GK}.

The cubic Szeg\H{o} equation has been identified as a model equation for evolution PDEs  displaying some lack of dispersion. More precisely, in \cite{GGhalf} this equation was proved to be the totally resonant normal form associated to the cubic half-wave equation on the circle, 
$$i\partial_tu-|D|u=\vert u\vert^2u\ ,$$
providing some effective dynamics of the latter equation for some long time regime. Furthermore,  H.~Xu \cite{Xu} constructed modified scattering theory for the nonlinear Schr\"odinger half-wave equation on the cylinder $\bbR_x\times \bbT_y$,
$$i\partial_tu+\partial_x^2u-|D_y|u=|u|^2u\ ,$$
with small and decaying data, proving that the long time dynamics of this equation with such data is completely described by the cubic Szeg\H{o} equation in the  $y$ variable.
As a consequence, the growth of high regularity Sobolev norms is carried over to the nonlinear Schr\"odinger half-wave equation.

Let us come back to cubic Szeg\H{o} equation \eqref{a4} itself. It is easily seen to be the Hamiltonian equation with respect to the symplectic structure
$$
\omega(u,v)=\Im \jap{u,v}_{H^2}
$$
on $H^2$ and the Hamiltonian 
$$
E(u)=\frac14\norm{u}_{H^4}^4.
$$
In addition, the cubic Szeg\H{o} equation possesses a Lax pair structure, discovered in \cite{GG00} and completed in \cite{GG12}, which is fundamental to its analysis. This Lax pair structure and its consequences will be discussed in Section~\ref{sec.c}. Here we only say that the Lax operator is a Hankel operator with the symbol $u$; this gives the connection with the first part of the paper. 

The purpose of this paper is to extend the flow of the cubic Szeg\H{o} equation \eqref{a4} to the whole space $H^2$. Let $\Phi(t)$, $t>0$, be the (non-linear) flow map of \eqref{a4}, defined on $\BMOA(\bbT)$, i.e. if $u=u(t,z)$ is a solution to \eqref{a4}, then 
$$
\Phi(t)u(0,\cdot)=u(t,\cdot). 
$$
%%%%%%%%%%%%%%
\begin{theorem}\label{thm.a2}
%%%%%%%%%%%%%%
The flow map $\Phi(t)$ can be extended to $H^2$ such the following properties are satisfied:
\begin{enumerate}[\rm (i)]
\item
$\norm{\Phi(t)u}_{H^2}=\norm{u}_{H^2}$ for all $u\in H^2$;
\item
if $\norm{u_n-u}_{H^2}\to0$, then $\norm{\Phi(t)u_n-\Phi(t) u}_{H^2}\to0$ as $n\to\infty$;
\item
the map $t\mapsto\Phi(t)u$ is continuous in $H^2$ for all $u\in H^2$;
\item
if $u\in H^4$, then $\Phi(t)u\in H^4$ and $E(\Phi(t)u)=E(u)$. 
\end{enumerate}
\end{theorem}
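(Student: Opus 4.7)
The plan is to construct $\Phi(t)u$ for $u\in H^2$ as the $H^2$-limit of $\Phi(t)u_n$, where $u_n\in\BMOA$ are polynomial approximants converging to $u$ in $H^2$ (e.g.\ the Fourier truncations $u_n:=\sum_{k=0}^n\wh u_k z^k$). Since polynomials lie in $\BMOA$, the flow $\Phi(t)u_n$ is available from the pre-existing theory, and the conservation of the $L^2$-norm along that flow (immediate from the Hamiltonian form of the equation, since $\jap{\bbP(|u|^2u),u}_{L^2}=\norm{u}_{L^4}^4\in\bbR$) gives the uniform bound $\norm{\Phi(t)u_n}_{H^2}=\norm{u_n}_{H^2}\leq\norm{u}_{H^2}$.

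The heart of the matter is to show that $\{\Phi(t)u_n\}_n$ is Cauchy in $H^2$ for each fixed $t$. I would exploit the Lax pair structure (Section~\ref{sec.c}): along the flow there exist unitaries $V_n(t)$ with $\Gamma_{\Phi(t)u_n}=V_n(t)\Gamma_{u_n}V_n(t)^*$. Theorem~\ref{thm.a1} plays a crucial double role. First, it allows this identity to be transported to the $H^2$ setting as an equality between closed (possibly unbounded) Hankel operators, with polynomials as a core. Second, it guarantees that the symbol itself can be recovered from the conjugated operator by $\Phi(t)u=\Gamma_{\Phi(t)u}\1$, which is well-defined precisely because $\1\in\Dom\Gamma_u$ for every $u\in H^2$ — the very reason, highlighted in the Discussion, that $H^2$ is the optimal class of symbols.

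The main obstacle will be to prove that $V_n(t)\Gamma_{u_n}V_n(t)^*\1$ actually converges in $H^2$. The difficulty is that, for $u\in H^2\setminus\BMOA$, the $u_n$ cannot be uniformly bounded in $\BMOA$, hence the $V_n(t)$ cannot be controlled in any uniform operator topology on $H^2$, and a naive pass-to-the-limit in the Lax equation fails. I would bypass this using the integrable structure: the spectral data of $\Gamma_u$ together with those of the shifted Hankel operator $S^*\Gamma_u$ parametrize $u\in H^2$ in an $H^2$-continuous fashion (here Theorem~\ref{thm.a1} is again essential, since it yields strong resolvent convergence $\Gamma_{u_n}\to\Gamma_u$ of closed operators), and these spectral data evolve under the flow by an explicit formula. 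Continuity of the inverse spectral correspondence would then deliver the Cauchy property and identify the limit intrinsically.

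Once $\Phi(t)u:=\lim_n\Phi(t)u_n$ is defined, properties (i)--(iii) follow by routine passage to the limit: (i) from $\norm{\Phi(t)u_n}_{H^2}=\norm{u_n}_{H^2}$; (ii) from the just-proved $H^2$-continuity of $u\mapsto\Phi(t)u$ on the dense class of polynomials combined with an $\eps/3$-argument; and (iii) from uniform convergence on compact time intervals together with the time-continuity of each $\Phi(\cdot)u_n$ in $\BMOA$. For (iv), the same approximation with $u\in H^4$ and $u_n\to u$ in $H^4$ gives $\Phi(t)u_n\in H^4$ and $E(\Phi(t)u_n)=E(u_n)\to E(u)$; weak-$H^4$ compactness, uniqueness of the $H^2$-limit, and time-reversibility then force $\Phi(t)u\in H^4$ with $E(\Phi(t)u)=E(u)$.
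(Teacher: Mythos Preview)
Your strategy inverts the paper's logic, and the inversion creates a genuine gap at exactly the point you flag as the ``main obstacle.'' You propose to define $\Phi(t)u$ as $\lim_n \Phi(t)u_n$ and therefore must first prove that $\{\Phi(t)u_n\}_n$ is Cauchy in $H^2$; to do this you invoke an ``$H^2$-continuous inverse spectral correspondence'' for the pair $(\Gamma_u,\Gamma_{S^*u})$. But no such correspondence is available off the shelf for unbounded Hankel operators, and establishing one is at least as hard as the theorem itself --- in effect you are assuming the conclusion. Strong resolvent convergence of $\Gamma_{u_n}^*\Gamma_{u_n}$ (which you correctly note follows from Theorem~\ref{thm.a1}) gives you convergence of spectral \emph{data}, but recovering $u$ from those data continuously in $H^2$ is precisely the missing step.

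The paper bypasses the Cauchy problem entirely. It \emph{defines} $\Phi(t)u$ for every $u\in H^2$ directly by the explicit formula
\[
(\Phi(t)u)(z)=\bigl\langle (I-z\Sigma^*)^{-1}u,\,q\bigr\rangle,\qquad \Sigma:=e^{itH_u^2}Se^{-it\wt H_u^2},\quad q:=e^{itH_u^2}\1,
\]
which makes sense because $H_u^2$ and $\wt H_u^2$ are self-adjoint for any $u\in H^2$ (this is where Theorem~\ref{thm.a1} enters). The entire weight of the argument is then a \emph{direct} proof that $\norm{\Phi(t)u}=\norm{u}$ for arbitrary $u\in H^2$ (Theorem~\ref{thm.c2}): one shows that $\{\Sigma^m q\}_{m\ge 0}$ is an orthonormal system whose closed span is the c.n.u.\ summand $X_{\rm cnu}$ in the Wold decomposition of the isometry $\Sigma$, and that $u\in X_{\rm cnu}$. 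The latter requires two nontrivial ingredients you do not mention: the Kato--Rosenblum theorem (since $\Sigma-e^{itH_u^2}Se^{-itH_u^2}$ is trace class by \eqref{c2}, the unitary part $\Sigma_{\rm u}$ has no absolutely continuous spectrum), and the F.\ and M.~Riesz theorem applied to the commutation relation $\Sigma^*\calH=\calH\Sigma$ for the auxiliary anti-linear operator $\calH=e^{itH_u^2}H_u$, which forces the Wold decomposition to reduce $\calH$ and hence $u=\calH q\in X_{\rm cnu}$. Once (i) is in hand for all $u\in H^2$, parts (ii) and (iii) are immediate: strong resolvent convergence gives $\Phi(t)u_n\rightharpoonup\Phi(t)u$ weakly, and the norm identity upgrades this to strong convergence --- no Cauchy argument is needed. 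For (iv), your weak-$H^4$ compactness plus time-reversibility sketch is circular (you have not yet shown $\Phi(t)$ maps $H^4$ to $H^4$); the paper instead uses the conserved quantity $J(x,u)=\jap{(I+xH_u^2)^{-1}\1,\1}$ and the observation that $u\in H^4$ is equivalent to boundedness of $\partial_x^2 J(x,u)$ as $x\to 0_+$, with $\norm{u}_{H^4}^4=\partial_x^2J(0_+,u)-(\partial_xJ(0_+,u))^2$.
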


Theorem~\ref{thm.a2} is sharp in the following sense: the flow $\Phi(t)$ cannot be continuously extended to   $H^2\cap W^{-\delta,2}(\bbT)$ with any $\delta>0$. The corresponding example is discussed in Section~\ref{sec.c}.

The statement of Theorem~\ref{thm.a2} may look  intriguing at first glance, since it claims the extension of  the flow map to square integrable functions, while the nonlinearity in the equation is cubic. The following  example shows that this can be done for very elementary equations. Indeed, the initial value problem
$$i\partial_tu(t,z)=|u(t,z)|^2u(t,z)\ ,\ u(0,z)=u_0(z)$$
can be solved as 
$$u(t,z)=u_0(z)\, {\rm e}^{-it|u_0(z)|^2}\ ,$$
and it is easy to check that the latter expression is well defined 
for $u_0\in L^2(\bbT)$ and the map $u_0\mapsto u(t,\cdot)$ is continuous on $L^2(\bbT)$.

As a low regularity sharp wellposedness result for an integrable Hamiltonian PDE, Theorem~\ref{thm.a2} can be compared to recent results on the cubic nonlinear Schr\"odinger equation \cite{HKV},  the Benjamin--Ono equation \cite{GKT}, and  the derivative nonlinear Schr\"odinger equation \cite{ HKNV}.
However, here the Lax operator is  of a different nature, and the methods for proving the theorem are specific to Hankel operators. 

The proof of Theorem~\ref{thm.a2} is based on the explicit formula for the flow $\Phi(t)$ on $\BMOA(\bbT)$ which was established in \cite{GG2}. We recall this formula in Section~\ref{sec.c}; it involves Hankel operators. We also use Theorem~\ref{thm.a1}, some tools from the Sz.-Nagy--Foias theory of contractions (Wold decomposition for isometries) and the Kato-Rosenblum theorem from scattering theory.

\subsection{The structure of the paper}
In Section~\ref{sec.b} we prove Theorem~\ref{thm.a1}. 
In Section~\ref{sec.c} we recall the Lax pair framework for the cubic Szeg\H{o} equation and reduce the proof of Theorem~\ref{thm.a2} to part (i) of the same Theorem. This part is proved in Section~\ref{sec.d}.

\subsection{Notation}\label{sec.a4}
We will mostly work in the Hilbert space $H^2$, and $\jap{f,g}$ (resp. $\norm{f}$) will refer to the inner product (resp. norm) in $H^2$. All other norms and inner products will be supplied with subscripts, e.g. $\norm{f}_{L^1(\bbT)}$.

The function identically equal to $1$ in $H^2$ is denoted by $\1$. 
We denote by $S$ the standard shift operator in $H^2$, acting as
$$
Sf(z)=zf(z), \quad f\in H^2.
$$
The backwards shift $S^*$ acts on $H^2$ by 
$$
S^*f(z)=\overline{z}(f(z)-\wh f_0). 
$$
We denote by $d\lambda(z)$ the normalised Lebesgue measure on $\bbT$.

\subsection{Acknowledgement}
The authors are grateful to S.~Treil for useful discussions. In particular, the proofs in Section~\ref{sec.d} are inspired by our recent joint work \cite{GPT}.
We are grateful to the referees for careful reading of the paper and for constructive remarks.

%%%%%%%%%%%%%%%%%%%%%%%%%%%%%%%%%%%%%%%%%%%%%%%%%
%%%%%%%%%%%%%%%%%%%%%%%%%%%%%%%%%%%%%%%%%%%%%%%%%
\section{Unbounded Hankel operators}\label{sec.b}
%%%%%%%%%%%%%%%%%%%%%%%%%%%%%%%%%%%%%%%%%%%%%%%%%
%%%%%%%%%%%%%%%%%%%%%%%%%%%%%%%%%%%%%%%%%%%%%%%%%

In this section we prove Theorem~\ref{thm.a1} and establish an auxiliary result (Theorem~\ref{thm.b2}) on the strong resolvent convergence of $\Gamma_u^*\Gamma_u$. 

\subsection{Proof of Theorem~\ref{thm.a1}(i): $\Gamma_u$ is closed}
This part is fairly standard. Suppose the sequence $f^{(k)}$ of elements of $\Dom \Gamma_u$ is Cauchy in the graph norm. The sequence $f^{(k)}$ is Cauchy in the usual norm of $H^2$ and therefore $\norm{f^{(k)}-f}\to0$ for some $f\in H^2$. We also have $\norm{\Gamma_uf^{(k)}-g}\to0$ for some $g\in H^2$; we need to prove that $f\in \Dom \Gamma_u$ and $\Gamma_uf=g$. 

Denote by $\bbP_N$ the projection from $L^1(\bbT)$ onto the set of all polynomials in $z$ of degree $\leq N$. Convergence $\norm{\Gamma_uf^{(k)}-g}\to0$ implies  that,  for all $N$,  we have 
\begin{equation}\label{z1} 
\norm{\bbP_N(\Gamma_uf^{(k)}-g)}\to 0\ . 
\end{equation}
On the other hand,  since $\norm{f^{(k)}-f}\to0$, we have $\norm{uJf^{(k)}-uJf}_{L^1(\bbT)}\to0$ and therefore all Fourier coefficients of $uJf^{(k)}$ converge to the corresponding Fourier coefficients of the $L^1$ function $uJf$. It follows that 
$$
\norm{\bbP_N(\Gamma_uf^{(k)})-\bbP_N(uJf)}\to0, \quad k\to\infty,
$$
for any finite $N$. Comparing with \eqref{z1}, we obtain
\begin{equation}\label{z2}
{\bbP_N(uJf)}=\bbP_N g
\end{equation}
for all $N$, and therefore $\bbP(uJf)\in H^2$, i.e. $f\in \Dom \Gamma_u$. 
Thus, \eqref{z2} can be rewritten as
$\bbP_N(\Gamma_uf)=\bbP_Ng.$
As $N$ is arbitrary, we conclude that $\Gamma_uf=g$. The proof of Theorem~\ref{thm.a1}(i) is complete.
 
\subsection{The commutation relation and the graph of $\Gamma_u$}
For any $f\in L^2(\bbT)$, we have
$$
\bbP(\overline{z}f)=S^*\bbP f;
$$
this implies the commutation relation 
\begin{equation}
\Gamma_u S=S^*\Gamma_u. 
\label{b1}
\end{equation}
It should be more precisely stated as follows: for any $f\in \Dom \Gamma_u$, we have $Sf\in \Dom \Gamma_u$ and $\Gamma_u Sf=S^*\Gamma_u f$. 

The commutation relation \eqref{b1} is fundamental for the theory of Hankel operators. In fact, it can be proved that if a bounded operator $\Gamma$ satisfies the commutation relation \eqref{b1} with the shift operator, then $\Gamma$ is necessarily a Hankel operator, see e.g. \cite[Section 1.1]{Peller}.

The importance of the commutation relation \eqref{b1} in the context of the proof of Theorem~\ref{thm.a1} is as follows. It implies that the graph of $\Gamma_u$ in $H^2\oplus H^2$, i.e. 
$$
\Graph(\Gamma_u)=\{(f,\Gamma_u f): f\in \Dom \Gamma_u\},
$$
satisfies the invariance property 
$$
(S\oplus S^*)\Graph(\Gamma_u)\subset \Graph(\Gamma_u). 
$$
We need a description of subspaces of $H^2\oplus H^2$ satisfying this property.

\subsection{Invariant subspaces of $S\oplus S^*$}
The description of invariant subspaces of $S\oplus S^*$
can be obtained from the Sz.-Nagy--Foias theory of contractions; the details have been worked out in \cite{Timotin}. 
%%%%%%%%%%%
\begin{theorem}
\cite[Theorem 4.1]{Timotin}
\label{thm.b1}
%%%%%%%%%%%
The invariant subspaces $Y$ of $S\oplus S^*$ acting on $H^2\oplus H^2$ are the following:
\begin{enumerate}[\rm (i)]
\item
Splitting invariant subspaces; that is, $Y=X\oplus X'$ with $X$ invariant to $S$ and $X'$ invariant to $S^*$. 
\item
Nonsplitting invariant subspaces. These are of the form
\begin{equation}
Y=\{\bigl(\theta_{21} w_1+\theta_{22}w_2, \bbP(\overline{z}\theta_{11} Jw_1)+\bbP(\overline{z}\theta_{12} Jw_2)\bigr): w_1,w_2\in H^2\},
\label{b2}
\end{equation}
where $\theta_{ij}$ are functions in the unit ball of $H^\infty$, such that $\theta_{11}$ and $\theta_{12}$ are not proportional, and 
\begin{equation}
\abs{\theta_{11}}^2+\abs{\theta_{12}}^2=1, 
\quad
\abs{\theta_{21}}^2+\abs{\theta_{22}}^2=1, 
\quad
\theta_{11}^\sharp \theta_{21}+\theta_{12}^\sharp\theta_{22}=0.
\label{b2a}
\end{equation}
\end{enumerate}
\end{theorem}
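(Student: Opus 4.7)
The plan is to apply the Sz.-Nagy--Foias theory of completely non-unitary contractions to $T := S \oplus S^*$ on $H^2 \oplus H^2$, classifying $T$-invariant subspaces via regular factorizations of the characteristic function of $T$. First I would verify that $T$ is completely non-unitary: on any $T$-invariant subspace $M$ where $T$ acts unitarily, the identity $\norm{T^n(f,g)}^2 = \norm{f}^2 + \norm{S^{*n}g}^2$ combined with $\norm{S^{*n}g} \to 0$ forces $g=0$ for all $(f,g)\in M$, and then forces $M = \{0\}$ since $S$ has no nontrivial unitary restriction. A direct computation identifies the one-dimensional defect spaces $\mathcal{D}_T = \{0\} \oplus \bbC\1$ and $\mathcal{D}_{T^*} = \bbC\1 \oplus \{0\}$, and yields characteristic function $\Theta_T \equiv 0$ (the term $T|_{\mathcal{D}_T}$ vanishes because $S^*\1 = 0$, while the remaining term vanishes because $D_{T^*}$ sees only the first coordinate whereas the resolvent image $(I-zT^*)^{-1}D_T(0,\1)$ lies entirely in the second). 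The splitting case is then immediate: $Y = X \oplus X'$ with $X \subset H^2 \oplus \{0\}$ and $X' \subset \{0\} \oplus H^2$ is $T$-invariant if and only if $X$ is $S$-invariant and $X'$ is $S^*$-invariant.

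For the non-splitting case, the Sz.-Nagy--Foias invariant-subspace theorem produces a regular factorization $0 = \Theta_T = \Theta_2 \Theta_1$ through an auxiliary Hilbert space $\mathcal{F}$, with contractive analytic $\Theta_1 : \mathcal{D}_T \to \mathcal{F}$ and $\Theta_2 : \mathcal{F} \to \mathcal{D}_{T^*}$. A minimality argument rules out $\dim \mathcal{F} = 1$ (a scalar factorization $\phi\psi = 0$ in $H^\infty$ forces $\phi \equiv 0$ or $\psi \equiv 0$ by analyticity, giving a splitting subspace), so the non-splitting hypothesis forces $\mathcal{F} \cong \bbC^2$. Identifying $\mathcal{F} \cong \bbC^2$ and $\mathcal{D}_T, \mathcal{D}_{T^*} \cong \bbC$ via the unit constant $\1$, the maps $\Theta_1$ and $\Theta_2$ are represented by pairs of scalar $H^\infty$ functions $(\theta_{11},\theta_{12})$ and $(\theta_{21},\theta_{22})$ in the unit ball, with non-proportionality of $\theta_{11}$ and $\theta_{12}$ enforcing the non-splitting character. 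The three identities in \eqref{b2a} then encode respectively the pointwise isometry of $\Theta_1^*$ on $\bbT$, the pointwise isometry of $\Theta_2$ on $\bbT$, and the product relation $\Theta_2\Theta_1 = 0$; the $\sharp$ operation in the last identity arises from a complex-conjugation twist inherent in identifying $\mathcal{D}_T$ and $\mathcal{D}_{T^*}$ via $\1$ within the Sz.-Nagy--Foias formalism.

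The main obstacle will be translating the abstract functional model back into the explicit form \eqref{b2}. For $\Theta_T \equiv 0$, the Sz.-Nagy--Foias model space reduces to $H^2(\mathcal{D}_{T^*}) \oplus (L^2(\mathcal{D}_T) \ominus H^2(\mathcal{D}_T))$, on which the compression of the bilateral shift acts as the model operator. The identification of this space with $H^2 \oplus H^2$ proceeds through the isomorphism $L^2 \ominus H^2 \cong H^2$ realised essentially by Fourier reflection together with a shift by $\bar z$, and this is precisely the source of the reflection $J$ and of the extra $\bar z$ appearing in the second coordinate of \eqref{b2}. Carrying out this identification carefully, and simultaneously verifying that the minimal choice $\mathcal{F} = \bbC^2$ captures every non-splitting invariant subspace (while a direct substitution confirms that every tuple satisfying \eqref{b2a} produces a genuine $T$-invariant subspace via \eqref{b2}), constitutes the technical heart of the proof.
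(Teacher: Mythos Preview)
The paper does not actually prove this theorem: it is quoted from \cite{Timotin}, and the only argument supplied in the paper is the remark that Timotin's original statement (formulated for $S\oplus S_*$ acting on $H^2\oplus H^2_-$) transfers to the present $H^2\oplus H^2$ setting via the unitary map $f\mapsto \overline{z}f(\overline{z})$ from $H^2_-$ onto $H^2$. Your outline --- identify $T=S\oplus S^*$ as a completely non-unitary contraction with one-dimensional defect spaces and vanishing characteristic function, then parametrise invariant subspaces by regular factorizations of $\Theta_T\equiv 0$ --- is precisely Timotin's method, so you are reconstructing the cited source rather than offering an alternative to anything in the present paper. Your discussion of the model-space identification (the reflection $J$ and the extra factor $\overline z$) corresponds exactly to the unitary equivalence the paper invokes.

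One point in your sketch deserves tightening. You argue that $\dim\mathcal F=1$ forces a splitting subspace and conclude that the non-splitting case has $\dim\mathcal F=2$, but you never justify the a~priori bound $\dim\mathcal F\le 2$. In the Sz.-Nagy--Foias framework the intermediate space is $\mathcal D_{T_1^*}$ for the restriction $T_1=T|_Y$; while $\dim\mathcal D_{T_1}\le\dim\mathcal D_T=1$ follows immediately from $T$-invariance of $Y$, the dual defect $\mathcal D_{T_1^*}$ is not bounded by $\mathcal D_{T^*}$ in general. Timotin obtains the bound by a direct computation specific to this model; you should either reproduce that step or supply an independent argument (for instance, exploiting that both $I-T^*T$ and $I-TT^*$ have rank one to bound the index jump across $Y$).
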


In fact, \cite[Theorem 4.1]{Timotin} gives the description of the invariant subspaces of $S\oplus S_*$ in $H^2\oplus H^2_-$, where $H^2_-=L^2(\bbT)\cap (H^2)^\perp$ and $S_*$ is the compression of the operator of multiplication by $z$ onto $H^2_-$. However, the operators $S^*$ on $H^2$ and $S_*$ on $H^2_-$ are related through the simple unitary equivalence $f\mapsto \overline{z}f(\overline{z})$; applying this unitary equivalence yields the version of the theorem that we give above.

It is clear that a splitting subspace $X\oplus X'$ with $X'\not=\{0\}$ cannot be a graph of an operator in $H^2\oplus H^2$, because it contains vectors of the form $(0,f)$ with $f\not=0$. Further, $X\oplus\{0\}$ can only be a graph of a zero operator. Thus, part (i) of Theorem~\ref{thm.b1} is of no relevance to us. 

Let us rewrite the second component in \eqref{b2} in a more compact form. We have 
$$
\bbP(\overline{z}\theta_{1j}Jw_j)=\bbP((S^*\theta_{1j})Jw_j)=\Gamma_{S^*\theta_{1j}}w_j 
$$
for $j=1,2$. 
Denoting for brevity
$$
\varphi_{1}=S^*\theta_{11}, \quad \varphi_{2}=S^*\theta_{12}, 
$$
we can therefore rewrite \eqref{b2} in the more compact form 
$$
Y=\{\bigl(\theta_{21} w_1+\theta_{22}w_2,\Gamma_{\varphi_{1}}w_1+\Gamma_{\varphi_{2}}w_2\bigr): w_1,w_2\in H^2\}.
$$
Conditions \eqref{b2a} on $\theta_{ij}$ will play no role in our proof below; all that matters to us is that $\theta_{ij}$ (and therefore $\varphi_{j}$) are elements of $H^\infty$. 

Finally, it will be important for us to characterise the orthogonal complements of subspaces $Y$. Since $Y$ is invariant under $S\oplus S^*$, its orthogonal complement $Y^\perp$ is invariant under the adjoint $S^*\oplus S$. Thus, the description of $Y^\perp$ can be obtained by the interchange of coordinates in Theorem~\ref{thm.b1}. 

\subsection{Proof of Theorem~\ref{thm.a1}(ii): polynomials are dense in $\Dom \Gamma_u$}
Let $\calP\subset H^2$ be the set of all polynomials of $z$. Let $Y_0$, $Y$ be the linear subsets of $H^2\oplus H^2$, 
$$
Y_0=\{(p,\Gamma_u p): p\in \calP\}, 
\quad 
Y=\Graph(\Gamma_u)=\{(f,\Gamma_u f): f\in H^2, \Gamma_u f\in H^2\}.
$$
Obviously, $Y_0\subset Y$; we have already proved that $Y$ is closed and we need to prove that  the closure of $Y_0$ coincides with $Y$. We will do this by proving that $Y_0^\perp\subset Y^\perp$, where the orthogonal complements are taken in $H^2\oplus H^2$. 

Let us first describe $Y_0^\perp$. Since $Y_0$ is invariant for $S\oplus S^*$, its orthogonal complement $Y_0^\perp$ is invariant for the adjoint operator $S^*\oplus S$. The description of the invariant subspaces of $S^*\oplus S$ is again given by Theorem~\ref{thm.b1} with interchanging the two components in $H^2\oplus H^2$. Since $Y_0$ is non-splitting, its orthogonal complement is also non-splitting. We conclude that $Y_0^\perp$ is of the form
\begin{equation}
Y_0^\perp=\{\bigl(\Gamma_{\varphi_1}w_1+\Gamma_{\varphi_2}w_2,\theta_1w_1+\theta_2w_2\bigr): w_1,w_2\in H^2\}
\label{b2b}
\end{equation}
with some parameters $\theta_1,\theta_2,\varphi_1,\varphi_2\in H^\infty$. 
The condition of orthogonality to $Y_0$ means that for all polynomials $p$,
\begin{equation}
\jap{\Gamma_{\varphi_1}w_1+\Gamma_{\varphi_2}w_2,p}
+
\jap{\theta_1w_1+\theta_2w_2,\Gamma_u p}
=0.
\label{b3a}
\end{equation}
Let us rewrite the second term here in a different way. Since $p^\sharp=\overline{Jp}$, we have 
$$
\jap{\theta_j w_j,\Gamma_u p}
=
\jap{\theta_j w_j,\bbP(uJp)}
=
\jap{\theta_j w_j,uJp}
=
\jap{\overline{Jp}\theta_jw_j,u}
=
\jap{p^\sharp\theta_j  w_j,u}
$$
for $j=1,2$,
and therefore condition \eqref{b3a} rewrites as 
\begin{equation}
\jap{\Gamma_{\varphi_1}w_1+\Gamma_{\varphi_2}w_2,p}
+
\jap{p^\sharp\theta_1 w_1+p^\sharp\theta_2 w_2,u}
=0
\label{b4}
\end{equation}
for all polynomials $p$. 

Now let us prove that $Y_0^\perp\subset Y^\perp$. Fix any $f\in \Dom \Gamma_u$; we need to check that $(f,\Gamma_uf)\perp Y_0^\perp$. It is clear that taking $w_1,w_2\in\calP$ in \eqref{b2b}, we obtain a dense subset of $Y_0^\perp$. Thus, it suffices to check that 
\begin{equation}
\jap{\Gamma_{\varphi_1}w_1+\Gamma_{\varphi_2}w_2,f}
+
\jap{\theta_1w_1+\theta_2w_2,\Gamma_uf}
=0
\label{b5}
\end{equation}
for all $w_1,w_2\in \calP$. Consider the second term in \eqref{b5}:
$$
\jap{\theta_jw_j,\bbP(uJf)}
=
\int_{\bbT} \theta_j(z)w_j(z)\overline{u(z)}\overline{f(\overline{z})}d\lambda(z),
$$
where $d\lambda$ is the normalised Lebesgue measure on $\bbT$;
the integral here is well-defined, since $\theta_j,w_j\in H^\infty$ and $u,f\in H^2$. We can rewrite the last integral as
$$
\jap{f^\sharp\theta_j  w_j,u},
$$
where the inner product is well-defined as both functions $ f^\sharp \theta_jw_j$ and  $u$ are in $H^2$. We conclude that condition \eqref{b5} that is to be checked is equivalent to 
\begin{equation}
\jap{\Gamma_{\varphi_1}w_1+\Gamma_{\varphi_2}w_2,f}
+
\jap{f^\sharp\theta_1 w_1+f^\sharp\theta_2 w_2,u}
=0,
\label{b6}
\end{equation}
for all $w_1,w_2\in\calP$. 

Now let $\{p_n\}$ be a sequence of polynomials such that $\norm{p_n-f}\to0$ as $n\to\infty$. For each $p=p_n$ and each $w_1,w_2\in\calP$, orthogonality condition \eqref{b4} holds true. We can now pass to the limit $n\to\infty$ in \eqref{b4}, because 
$$
\norm{p_n^\sharp\theta_j  w_j-f^\sharp\theta_j w_j}\to0
$$
as $n\to\infty$ for $j=1,2$. This yields \eqref{b6}. We have checked that $Y_0^\perp\subset Y^\perp$. 
The proof of Theorem~\ref{thm.a1}(ii) is complete.

\subsection{Proof of Theorem~\ref{thm.a1}(iii): $\Gamma_u^*=\Gamma_{u^\sharp}$}

Suppose $f\in\Dom\Gamma_u^*$, i.e. for some $h\in H^2$ we have
\begin{equation}
\jap{\Gamma_u g,f}=\jap{g,h}, \quad\forall g\in\Dom\Gamma_u
\label{b7}
\end{equation}
(in this case $h=\Gamma_u^*g$). 
In particular, this relation is true for polynomials $g$. For polynomials $g$ we have
\begin{align*}
\jap{\Gamma_u g,f}&=\jap{\bbP(uJg),f}=\jap{uJg,f}_{L^2(\bbT)}
\\
&=\int_{\bbT} u(z)g(\overline{z})\overline{f(z)}d\lambda(z)
=
\int_{\bbT} u(\overline{z})g(z)\overline{f(\overline{z})}d\lambda(z)
\\
&=
\int_{\bbT} \overline{u^\sharp(z)}g(z) \overline{Jf(z)}d\lambda(z)
=
\int_{\bbT}g(z)\overline{h_*(z)}d\lambda(z), 
\end{align*}
where $h_*=u^\sharp Jf\in L^1(\bbT)$. 
Comparing with \eqref{b7}, we find that $\bbP h_*=h\in H^2$; thus, $f\in \Dom \Gamma_{u^\sharp}$ and $\Gamma_{u^\sharp}f=h$. 

We have proved that $\Dom\Gamma_u^*\subset \Dom\Gamma_{u^\sharp}$ and the operators $\Gamma_u^*$ and $\Gamma_{u^\sharp}$ coincide on $\Dom\Gamma_u^*$, i.e. $\Gamma_u^*\subset \Gamma_{u^\sharp}$. 

Conversely, suppose $f\in\Gamma_{u^\sharp}$, i.e. $\bbP(u^\sharp Jf)=h\in H^2$. Then, by exactly the same argument, we find that \eqref{b7} holds true for all polynomials $g$. By part (ii) of the theorem, polynomials are dense in $\Dom \Gamma_u$, and so we can extend \eqref{b7} by continuity to all $g\in\Dom\Gamma_u$. This shows that $\Dom\Gamma_{u^\sharp}\subset\Dom\Gamma_u^*$. Summarizing, we obtain $\Gamma_u^*=\Gamma_{u^\sharp}$ and the proof of Theorem~\ref{thm.a1} is complete. \qed

\subsection{The strong resolvent convergence of $\Gamma_u^*\Gamma_u$ and $\Gamma_u\Gamma_u^*$}
Let $u\in H^2$ and let $\Gamma_u$ be as in Theorem~\ref{thm.a1}. Consider the closed operator $\Gamma_u^*\Gamma_u$ with the domain
$$
\Dom \Gamma_u^*\Gamma_u=\{f\in\Dom\Gamma_u: \Gamma_uf\in\Dom \Gamma_u^*\}.
$$
This is the self-adjoint operator, corresponding to the closed bilinear form
$$
\jap{\Gamma_uf,\Gamma_ug}, \quad f,g\in \Dom \Gamma_u \ , 
$$
see e.g. \cite[Section VIII.6]{RS1}. 
In the same way one defines $\Gamma_u\Gamma_u^*$. 

We recall that a sequence of self-adjoint operators $A_n$ is said to converge to $A$ in the \emph{strong resolvent sense}, if $(A_n-\lambda)^{-1}\to(A-\lambda)^{-1}$ in the strong operator topology for all $\lambda$ with $\Im \lambda\not=0$. 
%%%%%%%%%%%%%%%%%
\begin{theorem}\label{thm.b2}
%%%%%%%%%%%%%%%%%
Let $\{u_n\}_{n=1}^\infty$ be a sequence of elements of $H^2$ with $\norm{u_n-u}\to0$ as $n\to\infty$. Then 
$$
\Gamma_{u_n}^*\Gamma_{u_n}\to \Gamma_u^*\Gamma_u
\quad \text{ and }\quad
\Gamma_{u_n}\Gamma_{u_n}^*\to \Gamma_u\Gamma_u^*
$$
in the strong resolvent sense. 
\end{theorem}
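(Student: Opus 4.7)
The plan is to establish strong convergence of resolvents
$(I+\Gamma_{u_n}^*\Gamma_{u_n})^{-1}\to(I+\Gamma_u^*\Gamma_u)^{-1}$; since all the operators involved are non-negative and self-adjoint, this is equivalent to strong resolvent convergence. The only ingredient I extract from $\|u_n-u\|\to 0$ is the elementary fact that on polynomials one has norm convergence $\Gamma_{u_n}p\to\Gamma_u p$, since $\|(u_n-u)Jp\|_{L^2}\leq\|u_n-u\|\,\|Jp\|_{L^\infty}\to 0$.

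Fix $f\in H^2$ and set $g_n:=(I+\Gamma_{u_n}^*\Gamma_{u_n})^{-1}f$. Pairing the equation $(I+\Gamma_{u_n}^*\Gamma_{u_n})g_n=f$ with $g_n$ yields the energy bound $\|g_n\|^2+\|\Gamma_{u_n}g_n\|^2=\jap{f,g_n}\leq\|f\|^2$. Along a subsequence I may therefore assume $g_n\rightharpoonup g$ and $\Gamma_{u_n}g_n\rightharpoonup h$ weakly in $H^2$, and the task is to show $(I+\Gamma_u^*\Gamma_u)g=f$.

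The crux of the argument, and the main obstacle, is this identification. For a polynomial $q\in\calP$, Theorem~\ref{thm.a1}(iii) gives $\jap{\Gamma_{u_n}g_n,q}=\jap{g_n,\Gamma_{u_n^\sharp}q}$; passing to the limit, using weak convergence of $g_n$ paired with norm convergence $\Gamma_{u_n^\sharp}q\to\Gamma_{u^\sharp}q$, I obtain $\jap{h,q}=\jap{g,\Gamma_{u^\sharp}q}$. By Theorem~\ref{thm.a1}(ii) applied to $u^\sharp$, polynomials are a core for $\Gamma_{u^\sharp}=\Gamma_u^*$, so this identity extends by density to every $q\in\Dom\Gamma_u^*$, which forces $g\in\Dom\Gamma_u$ and $\Gamma_u g=h$. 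A parallel argument based on the adjoint identity $\jap{\Gamma_{u_n}^*\Gamma_{u_n}g_n,q}=\jap{\Gamma_{u_n}g_n,\Gamma_{u_n}q}$ for $q\in\calP$, passed to the limit and then extended from polynomials to $\Dom\Gamma_u$ via Theorem~\ref{thm.a1}(ii) for $u$ itself, gives $h\in\Dom\Gamma_u^*$ and $\Gamma_u^* h=f-g$. Combining, $(I+\Gamma_u^*\Gamma_u)g=f$, as wanted.

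The upgrade from weak to strong convergence is then routine: $\|g_n\|^2+\|\Gamma_{u_n}g_n\|^2=\jap{f,g_n}\to\jap{f,g}=\|g\|^2+\|\Gamma_u g\|^2$, which combined with the lower-semicontinuity inequalities $\|g\|\leq\liminf\|g_n\|$ and $\|\Gamma_u g\|\leq\liminf\|\Gamma_{u_n}g_n\|$ forces $\|g_n\|\to\|g\|$; together with $g_n\rightharpoonup g$ this gives strong convergence $g_n\to g$, and since the weak limit is subsequence-independent the full sequence converges. The statement for $\Gamma_u\Gamma_u^*$ then follows immediately from the identity $\Gamma_u\Gamma_u^*=\Gamma_{u^\sharp}^*\Gamma_{u^\sharp}$ (Theorem~\ref{thm.a1}(iii)) applied to the convergent sequence $u_n^\sharp\to u^\sharp$ in $H^2$.
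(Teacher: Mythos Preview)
Your proof is correct and follows essentially the same route as the paper: bound $\|g_n\|$ and $\|\Gamma_{u_n}g_n\|$, pass to weak limits along a subsequence, identify the limits via testing against polynomials and invoking the core property from Theorem~\ref{thm.a1}(ii) (first for $u^\sharp$, then for $u$), and handle the full sequence by a subsequence argument. The only cosmetic differences are that you work at the single resolvent point $\lambda=-1$ (legitimate since the operators are non-negative self-adjoint) and that you upgrade weak to strong convergence by a direct energy argument, whereas the paper appeals to the general fact that weak resolvent convergence implies strong resolvent convergence for self-adjoint operators.
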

\begin{proof}
We prove the convergence of $\Gamma_{u_n}^*\Gamma_{u_n}$; from here, replacing $u_n$ by $u_n^\sharp$, one obtains the convergence of $\Gamma_{u_n}\Gamma_{u_n}^*$ as well. 
Fix $\lambda$ with $\Im \lambda\not=0$ and $f\in H^2$; denote 
$$
\varphi_n=(\Gamma_{u_n}^*\Gamma_{u_n}-\lambda)^{-1}f, 
\quad
\varphi=(\Gamma_u^*\Gamma_u-\lambda)^{-1}f.
$$
We need to prove that $\norm{\varphi_n-\varphi}\to0$ as $n\to\infty$. 
A simple calculation with the first resolvent identity (see e.g. \cite[Problem VIII.20]{RS1}) shows that it is sufficient to prove the weak convergence $\varphi_n\to\varphi$.

Since $\Im \lambda\not=0$, we have
$$
\norm{\varphi_n}\leq C_1(\lambda)\norm{f}. 
$$
Next, 
$$
\norm{\Gamma_{u_n}\varphi_n}^2-\lambda\norm{\varphi_n}^2
=
\jap{\Gamma_{u_n}^*\Gamma_{u_n}\varphi_n,\varphi_n}-\lambda\norm{\varphi_n}^2
=
\jap{f,\varphi_n}=\jap{f,(\Gamma_{u_n}^*\Gamma_{u_n}-\lambda)^{-1}f}, 
$$
and so 
$$
\norm{\Gamma_{u_n}\varphi_n}^2
\leq 
\abs{\jap{f,(\Gamma_{u_n}^*\Gamma_{u_n}-\lambda)^{-1}f}}+\abs{\lambda}\norm{\varphi_n}^2
\leq
C_2(\lambda)\norm{f}^2.
$$
Thus, we can select a subsequence such that $\varphi_n\to \widetilde\varphi$ weakly and $\Gamma_{u_n}\varphi_n\to\psi$ weakly for some $\widetilde \varphi,\psi\in H^2$. 

Let us prove that $\widetilde\varphi\in\Dom\Gamma_u$ and $\Gamma_u\widetilde\varphi=\psi$. For any polynomial $p$, we have 
$$
\jap{\varphi_n,\Gamma_{u_n}^*p}
=
\jap{\Gamma_{u_n}\varphi_n,p}
\to 
\jap{\psi,p}.
$$
Since $p\in H^\infty$, we also have $\norm{\Gamma_{u_n}^*p-\Gamma_u^*p}\to0$, and so 
$$
\jap{\varphi_n,\Gamma_{u_n}^*p}
\to
\jap{\widetilde\varphi,\Gamma_u^*p}.
$$
It follows that 
$$
\jap{\widetilde\varphi,\Gamma_u^*p}=\jap{\psi,p}
$$
for any polynomial $p$. By Theorem~\ref{thm.a1}, polynomials are dense in $\Dom\Gamma_u^*$ with respect to the graph norm and therefore the last identity extends by continuity to all $p\in\Dom\Gamma_u^*$. It follows that $\widetilde\varphi\in\Dom\Gamma_u$ and 
$\Gamma_u\widetilde\varphi=\psi$. 

By the definition of $\varphi_n$, for any polynomial $p$ we have
$$
\jap{\Gamma_{u_n}\varphi_n,\Gamma_{u_n}p}
-
\lambda\jap{\varphi_n,p}
=
\jap{f,p}.
$$
By the previous step of the proof, we have $\varphi_n\to\widetilde\varphi$ and $\Gamma_{u_n}\varphi_n\to\Gamma_u\widetilde\varphi$ weakly. We also have $\norm{\Gamma_{u_n}p-\Gamma_up}\to0$. Thus, passing to the limit in the last identity, we find
$$
\jap{\Gamma_u\widetilde\varphi,\Gamma_up}-\lambda\jap{\widetilde\varphi,p}
=
\jap{f,p}
$$
for all polynomials $p$. Using Theorem~\ref{thm.a1} again, we extend this identity from polynomials $p$ to all elements $p\in\Dom\Gamma_u$. It follows that $\Gamma_u\widetilde\varphi\in\Dom\Gamma_u^*$ and 
$$
\Gamma_u^*\Gamma_u\widetilde\varphi=\lambda\widetilde\varphi+f,
$$
which can be equivalently rewritten as 
$$
\widetilde\varphi=(\Gamma_u^*\Gamma_u-\lambda)^{-1}f.
$$
Now recall that the element in the right hand side here is $\varphi$. 
We have proved that $\varphi_n\to\widetilde\varphi=\varphi$ weakly
over a subsequence. 
Finally, in order to prove that the weak convergence $\varphi_n\to\varphi$ holds over the whole sequence, we use the standard trick: assume that the convergence fails over some subsequence, then use the same argument to select a convergent subsubsequence -- contradiction.
The proof of Theorem~\ref{thm.b2} is complete.
\end{proof}

%%%%%%%%%%%%%%%%%%%%%%%%%%%%%%%%%%%%%%%%%%%%%%%%%
%%%%%%%%%%%%%%%%%%%%%%%%%%%%%%%%%%%%%%%%%%%%%%%%%
\section{The flow of the cubic Szeg\H{o} equation}\label{sec.c}
%%%%%%%%%%%%%%%%%%%%%%%%%%%%%%%%%%%%%%%%%%%%%%%%%
%%%%%%%%%%%%%%%%%%%%%%%%%%%%%%%%%%%%%%%%%%%%%%%%%

\subsection{Anti-linear Hankel operators}
The Lax pair formalism for the cubic Szeg\H{o} equation is  described in terms of the anti-linear variant of the definition of Hankel operators, viz. 
$$
H_uf=\Gamma_uf^\sharp=\bbP(u\overline{f}).
$$
We first assume $u\in H^\infty$ and discuss the relevant algebraic aspects of this definition. The operator $H_u$ satisfies the symmetry relation 
$$
\jap{H_u f,g}=\jap{H_ug,f}.
$$
Indeed, by the definition of $H_u$, both sides are equal to 
$$
\int_\bbT u(z)\overline{f(z)}\overline{g(z)}d\lambda(z).
$$
Since $S^*(f^\sharp)=(S^*f)^\sharp$, the commutation relation \eqref{b1} also holds for $H_u$: 
$$
S^*H_u=H_uS.
$$
By a direct calculation, the operator $H_u^2$ is linear and 
$$
H_u^2=\Gamma_u\Gamma_u^*.
$$

Now let $u\in H^2$, so that $H_u$ may be unbounded. Since $\norm{f^\sharp}=\norm{f}$, all the relevant properties of $H_u$ can be directly inferred from those of $\Gamma_u$. For future reference, below we record these properties.

For $u\in H^2$ we set
\begin{equation}
\Dom H_u=\{f\in H^2: \bbP(u\overline{f})\in H^2\}
\label{c6}
\end{equation}
and define 
\begin{equation}
H_uf=\bbP(u\overline{f}), \quad f\in \Dom H_u.
\label{c7}
\end{equation}
As in the linear case, the graph norm of $H_u$ is defined as
$$
\norm{f}_{H_u}=(\norm{f}^2+\norm{H_u f}^2)^{1/2}, \quad f\in\Dom H_u.
$$
Theorem~\ref{thm.a1} translates into the language of anti-linear Hankel operators as follows:
%%%%%%%%%%%%%%%%%
\begin{theorem}\label{thm.c1}
%%%%%%%%%%%%%%%%%
Let $u\in H^2$, and let $H_u$ be the anti-linear Hankel operator defined by \eqref{c7} on the domain \eqref{c6}. Then: 
\begin{enumerate}[\rm (i)]
\item
$H_u$ is closed, i.e. $\Dom H_u$ is closed with respect to the graph norm of $H_u$;
\item
the set of all polynomials is dense in $\Dom H_u$ with respect to the graph norm;
\item
suppose for some $f,h\in H^2$  we have 
$$
\jap{H_ug,f}=\jap{h,g}, \quad \forall g\in\Dom H_u.
$$
Then $f\in\Dom H_u$ and $H_uf=h$. 
\end{enumerate}
\end{theorem}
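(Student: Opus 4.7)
The plan is to reduce everything to Theorem~\ref{thm.a1} through the elementary identity
\[
H_u f = \bbP(u \overline{f}) = \bbP(u \cdot J f^\sharp) = \Gamma_u f^\sharp,
\]
valid because $(J f^\sharp)(z) = f^\sharp(\overline{z}) = \overline{f(z)}$. The map $f \mapsto f^\sharp$ is an anti-linear involutive isometry of $H^2$ that sends the set of polynomials onto itself, so the identity above induces an anti-linear bijection $\Dom H_u \to \Dom \Gamma_u$, $f \mapsto f^\sharp$, which is isometric for the respective graph norms. This will let me transfer parts (i) and (ii) directly from Theorem~\ref{thm.a1}.

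For (i), I would take a Cauchy sequence $\{f_n\}$ in $\Dom H_u$ with respect to the graph norm of $H_u$. Then $\{f_n^\sharp\}$ is Cauchy in the graph norm of $\Gamma_u$, so by Theorem~\ref{thm.a1}(i) it converges in this norm to some $g \in \Dom \Gamma_u$; setting $f = g^\sharp$ gives $f \in \Dom H_u$ and $f_n \to f$ in the graph norm of $H_u$. For (ii), given any $f \in \Dom H_u$, Theorem~\ref{thm.a1}(ii) produces polynomials $p_n$ with $p_n \to f^\sharp$ in the graph norm of $\Gamma_u$; the polynomials $p_n^\sharp$ then converge to $f$ in the graph norm of $H_u$.

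For (iii) I would proceed more directly, without appealing to the adjoint calculation $\Gamma_u^*=\Gamma_{u^\sharp}$: test the hypothesis against the monomials $g = z^n$, $n\geq 0$. Each $z^n$ lies in $\Dom H_u$ because $H_u(z^n) = \bbP(u \overline{z}^n) = \sum_{k \geq n}\wh u_k z^{k-n} \in H^2$. A direct computation gives
\[
\langle H_u z^n, f\rangle = \int_\bbT u(z)\, \overline{z}^n\, \overline{f(z)}\, d\lambda(z) = \widehat{u\overline{f}}_n,
\]
the $n$-th Fourier coefficient of the $L^1(\bbT)$ function $u\overline{f}$. The hypothesis therefore forces $\widehat{u\overline{f}}_n = \wh h_n$ for every $n \geq 0$, which is precisely the statement $\bbP(u\overline{f}) = h \in H^2$; hence $f \in \Dom H_u$ and $H_u f = h$. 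There is no serious obstacle: the substantive work has already been carried out for $\Gamma_u$ in Theorem~\ref{thm.a1}, and the passage to $H_u$ is a bookkeeping exercise via the involution $\sharp$. The one point worth highlighting is that (iii) does not actually require the density statement (ii): polynomials are already in $\Dom H_u$, so they are available as test functions, and the Fourier calculation collapses to a one-line identification on the $L^1$ side.
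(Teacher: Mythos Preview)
Your argument is correct. Parts (i) and (ii) are handled exactly as in the paper, by transporting the graph of $H_u$ to the graph of $\Gamma_u$ via the anti-linear isometry $f\mapsto f^\sharp$ and invoking Theorem~\ref{thm.a1}.

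For part (iii) you take a genuinely different and more direct route. The paper translates the hypothesis into the linear setting, obtaining $f\in\Dom\Gamma_u^*$ with $\Gamma_u^*f=h^\sharp$, and then invokes Theorem~\ref{thm.a1}(iii) (i.e.\ the identification $\Gamma_u^*=\Gamma_{u^\sharp}$, whose proof in turn uses the density statement (ii)) to conclude $f^\sharp\in\Dom\Gamma_u$. Your approach instead tests the hypothesis only against the monomials $g=z^n$, computes $\jap{H_u z^n,f}=\widehat{u\overline f}_n$, and reads off $\bbP(u\overline f)=h$ directly. This works because the statement of (iii) already hands you the relation on all of $\Dom H_u$, so only the ``easy'' direction of the adjoint identification is implicitly being used; density of polynomials is not needed. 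Your observation that (iii) is independent of (ii) is correct and is a mild sharpening of the paper's presentation. The trade-off is that the paper's route makes the connection to the linear adjoint $\Gamma_u^*$ explicit, which is conceptually satisfying, while yours is shorter and self-contained.
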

\begin{proof}
(i) and (ii) immediately follow from Theorem~\ref{thm.a1}; only (iii) requires proof. Translating the hypothesis of (iii) into the language of linear Hankel operators, we obtain the following statement:
$$
\jap{\Gamma_ug^\sharp,f}=\jap{h,g}, \quad \forall g^\sharp\in\Dom\Gamma_u.
$$
Since $\jap{h,g}=\jap{g^\sharp,h^\sharp}$, we can rewrite this as 
$$
\jap{\Gamma_ug^\sharp,f}=\jap{g^\sharp,h^\sharp}, \quad \forall g^\sharp\in\Dom\Gamma_u.
$$
This means that $f\in \Dom\Gamma_u^*$ and $\Gamma_u^*f=h^\sharp$. 
By Theorem~\ref{thm.a1}(iii), we find that $f\in\Dom\Gamma_{u^\sharp}$ and $\Gamma_{u^\sharp}f=h^\sharp$. Applying $\sharp$ to both sides here, we obtain $f^\sharp\in\Dom \Gamma_u$ and $\Gamma_uf^\sharp=h$, or equivalently $f\in\Dom H_u$ and $H_uf=h$, as claimed.
\end{proof}

\begin{corollary}\label{cr.c2}
Let $u\in H^2$; then for all $f,g\in\Dom H_u$, we have 
$$
\jap{H_uf,g}=\jap{H_ug,f}\ .
$$
\end{corollary}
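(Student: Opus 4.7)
The plan is to bootstrap the symmetry from the polynomial case to the full maximal domain via the graph-norm density established in Theorem~\ref{thm.c1}(ii).

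First I would verify the identity on polynomials. If $p, q \in \calP$, then $\overline{p}$ is bounded on $\bbT$, so $u\overline{p} \in L^2(\bbT)$ and $H_u p = \bbP(u\overline{p})$ is computed as an $L^2$ projection. Therefore
\[
\jap{H_u p, q} = \jap{\bbP(u\overline{p}), q}_{H^2} = \jap{u\overline{p}, q}_{L^2} = \int_\bbT u(z)\,\overline{p(z)}\,\overline{q(z)}\,d\lambda(z),
\]
which is manifestly symmetric in $p$ and $q$; the same manipulation with the roles reversed gives $\jap{H_u q, p}$, so the symmetry holds on $\calP \times \calP$.

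Next I would invoke Theorem~\ref{thm.c1}(ii) to choose sequences of polynomials $\{p_n\}, \{q_n\}$ with $p_n \to f$, $q_n \to g$ in the graph norm of $H_u$, that is,
\[
\norm{p_n - f} + \norm{H_u p_n - H_u f} \to 0, \qquad \norm{q_n - g} + \norm{H_u q_n - H_u g} \to 0.
\]
Then I would pass to the limit in the identity $\jap{H_u p_n, q_n} = \jap{H_u q_n, p_n}$. On the left,
\[
\abs{\jap{H_u p_n, q_n} - \jap{H_u f, g}} \leq \norm{H_u p_n - H_u f}\,\norm{q_n} + \norm{H_u f}\,\norm{q_n - g} \longrightarrow 0,
\]
and by the symmetric estimate the right-hand side converges to $\jap{H_u g, f}$. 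This yields the claim.

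There is essentially no obstacle here beyond the density result already in hand: the only thing to be careful about is that neither side is an improper quantity, which is precisely what membership of $f$ and $g$ in $\Dom H_u$ guarantees. The entire weight of the argument sits in Theorem~\ref{thm.c1}(ii), which legitimises replacing arbitrary elements of the maximal domain by polynomials in the graph norm; once that is available, symmetry for $H_u$ is inherited from the trivial symmetry of the triple integral for bounded test functions.
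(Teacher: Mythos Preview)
Your proof is correct and follows exactly the approach indicated in the paper: verify the identity on polynomials by the direct computation $\jap{H_up,q}=\int_\bbT u\,\overline{p}\,\overline{q}\,d\lambda$, then pass to the limit using the graph-norm density from Theorem~\ref{thm.c1}(ii). The paper compresses this into a single sentence, and you have spelled out the details accurately.
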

\begin{proof} For polynomials $f,g$ this is a direct calculation; now use Theorem~\ref{thm.c1}(ii) to extend to all $f,g\in\Dom H_u$.
\end{proof}

Finally, we note that when $H_u$ is unbounded, the precise form of the commutation relation $S^*H_u=H_uS$ is as follows: for any $f\in \Dom H_u$, one has $Sf\in \Dom H_u$ and $S^*H_uf=H_uSf$.

%%%%%%%%%%%%%%%%%%%%%%%%%
\subsection{The operator $\widetilde H_u$}
%%%%%%%%%%%%%%%%%%%%%%%%%
The Lax pair formalism of the cubic Szeg\H{o} equation involves two Hankel operators: $H_u$ and $H_{S^*u}$. For typographical reasons, we denote 
$$
\widetilde H_u:=H_{S^*u}.
$$
First let us assume that $u\in H^\infty$ and discuss the algebraic properties of $\wt H_u$. 
Observe that $\widetilde H_u$ can be alternatively defined by 
\begin{equation}
\widetilde H_u=S^*H_u=H_uS.
\label{c4}
\end{equation}
Recall that $S$ and $S^*$ satisfy the identities
\begin{equation}
S^*S=I, \quad SS^*=I-\jap{\cdot,\1}\1,
\label{c1}
\end{equation}
where $\jap{\cdot,\1}\1$ is the (rank one) operator of projection onto constants. 
As a consequence of this, the operators $H_u$ and $\wt H_u$ satisfy a rank one identity which is fundamental to the theory. We have
$$
\wt H_u^2=H_uSS^*H_u=H_u(I-\jap{\cdot,\1}\1)H_u=H_u^2-\jap{\cdot,H_u\1}H_u\1.
$$
By the definition of $H_u$, we have $H_u\1=u$, and so we finally obtain 
\begin{equation}
\wt H_u^2=H_u^2-\jap{\cdot,u}u.
\label{c2}
\end{equation}

Now let $u\in H^2$, so that $H_u$ may be unbounded. Then \eqref{c4} still holds true on $\Dom H_u$ and shows that $\Dom \wt H_u=\Dom H_u$. 
Identity \eqref{c2} should be understood in the quadratic form sense, i.e. 
$$
\norm{\wt H_u f}^2=\norm{H_u f}^2-\abs{\jap{f,u}}^2, \quad f\in \Dom H_u. 
$$

\subsection{Lax pair identities and formula for the flow}
Consider the cubic Szeg\H{o} equation \eqref{a4} for smooth initial data, say in $C^\infty (\bbT)\cap H^2$. In this case, as established in \cite{GG2}, the solution $u$ to the cubic Szeg\H{o} equation  satisfies the following two Lax pair identities:
\begin{align*}
\frac{dH_u}{dt}&=[B_u,H_u]\ ,\ B_u:=\frac i2H_u^2-iT_{\vert u\vert ^2}\ ,
\\
\frac{d\wt H_u}{dt}&=[\wt B_u,\wt H_u]\ ,\ \wt B_u:=\frac i2\wt H_u^2-iT_{\vert u\vert ^2}\ ,
\end{align*}
where $T_{\abs{u}^2}$ is the Toeplitz operator with the symbol $\abs{u}^2$.
\begin{remark*}
Observe that these Lax pair identities are formulated in terms of the anti-linear variant of Hankel operators $H_u$. We are not aware of an equivalent formulation in terms of the linear Hankel operators $\Gamma_u$. Thus, the Lax pair structure forces us to work with $H_u$ rather than $\Gamma_u$. 
\end{remark*}

As a consequence of these Lax pair identities, an explicit formula for the flow $\Phi(t)$ has been derived in \cite{GG2} for smooth initial data. The most convenient way to display this formula is to regard $\Phi(t)u\in H^2$ as a function of the complex variable $\abs{z}<1$ rather than as a function on the unit circle. With this convention, the formula from \cite{GG2} is 
\begin{equation}
(\Phi(t)u)(z)=\jap{(I-ze^{-itH_u^2}e^{it\wt H_u^2}S^*)^{-1}e^{-itH_u^2}u,\1}, \quad \abs{z}<1
\label{c3}
\end{equation}
for $u\in C^\infty (\bbT)\cap H^2$. 
Clearly, the operator norm of $e^{-itH_u^2}e^{it\wt H_u^2}S^*$ equals one, and therefore $(\Phi(t)u)(z)$ is well-defined and holomorphic for $\abs{z}<1$. 
The main ingredient of our proof is the following statement.
%%%%%%%%%%%%%%%%
\begin{theorem}\label{thm.c2}
%%%%%%%%%%%%%%%%
For any $u\in H^2$ and $t\in\bbR$, let $\Phi(t)u$ be the holomorphic function in the open unit disk, defined by \eqref{c3}. Then $\Phi(t)u\in H^2$ and 
$$
\norm{\Phi(t)u}=\norm{u}. 
$$
\end{theorem}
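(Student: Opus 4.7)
The plan is to prove Theorem~\ref{thm.c2} by approximation from smooth initial data. For smooth $u_n\in C^\infty(\bbT)\cap H^2$, the formula \eqref{c3} is shown in \cite{GG2} to produce the Szeg\H{o} solution, and the conservation law for the $L^2$ norm gives $\norm{\Phi(t)u_n}=\norm{u_n}$. Given $u\in H^2$, choose smooth $u_n$ with $u_n \to u$ in $H^2$; the goal is to pass to the limit in \eqref{c3} and recover both $\Phi(t)u\in H^2$ and $\norm{\Phi(t)u}=\norm{u}$.

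First I would pass to the limit inside the formula. Theorem~\ref{thm.b2} gives strong resolvent convergence $H_{u_n}^2 = \Gamma_{u_n}\Gamma_{u_n}^* \to H_u^2$ and $\wt H_{u_n}^2 = \Gamma_{S^*u_n}\Gamma_{S^*u_n}^*\to \wt H_u^2$; by functional calculus for bounded continuous functions, the unitary groups $e^{-itH_{u_n}^2}$ and $e^{it\wt H_{u_n}^2}$ converge strongly for each fixed $t$. Since $u_n \to u$ in norm, one deduces strong convergence $K_{t,n}:=e^{-itH_{u_n}^2}e^{it\wt H_{u_n}^2}S^*\to K_t$ together with norm convergence $\eta_n:=e^{-itH_{u_n}^2}u_n\to \eta:=e^{-itH_u^2}u$. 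Expanding $(I-zK_t)^{-1}=\sum_{k\geq 0} z^k K_t^k$, each Taylor coefficient at the origin, $\jap{K_{t,n}^k\eta_n,\1}$, converges to $\jap{K_t^k\eta,\1}$. Combined with the uniform bound $\norm{\Phi(t)u_n}=\norm{u_n}$, a Banach--Alaoglu argument together with the uniqueness of weak limits (identified via Fourier coefficients) yields $\Phi(t)u\in H^2$ and $\Phi(t)u_n\rightharpoonup \Phi(t)u$ weakly in $H^2$, whence $\norm{\Phi(t)u}\leq\norm{u}$ by lower semicontinuity of the norm.

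For the reverse inequality I would work with the adjoint isometry $V:=K_t^*=S\,e^{-it\wt H_u^2}e^{itH_u^2}$. Since $S^*\1=0$, we have $V^*\1=0$, so $\{V^k\1\}_{k\geq 0}$ is an orthonormal sequence in $H^2$. Because the Taylor coefficients of $\Phi(t)u$ are $\jap{\eta,V^k\1}$, one gets
$$
\norm{\Phi(t)u}^2=\sum_{k\geq 0}\abs{\jap{\eta,V^k\1}}^2 \leq \norm{\eta}^2=\norm{u}^2,
$$
with equality if and only if $\{V^k\1\}_{k\geq 0}$ is an orthonormal basis of $H^2$. By the Sz.-Nagy--Wold decomposition of the isometry $V$, and using that $\dim\Ker V^*=\dim\Ker S^*=1$, this reduces to showing that $V$ is a pure unilateral shift, i.e. $\bigcap_{k\geq 0}V^k H^2=\{0\}$.

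Establishing the triviality of the unitary part of $V$ is the main obstacle, and is where I expect scattering theory to enter. The identity \eqref{c2} shows that $\wt H_u^2-H_u^2=-\jap{\cdot,u}u$ is of rank one, so the Kato--Rosenblum theorem yields existence and completeness of the wave operators relating $H_u^2$ and $\wt H_u^2$ on their absolutely continuous subspaces. The expectation is that combining this scattering-theoretic asymptotics for $e^{-it\wt H_u^2}e^{itH_u^2}$ with the compressive action of $S$ baked into $V$ will force any vector in $\bigcap_k V^k H^2$ to vanish, delivering $\norm{\Phi(t)u}=\norm{u}$. The detailed execution of this Wold-decomposition/wave-operator step, which is the heart of the matter, is deferred to Section~\ref{sec.d}.
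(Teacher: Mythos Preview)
Your reduction to the Wold decomposition of the isometry $V=S\,e^{-it\wt H_u^2}e^{itH_u^2}$ is exactly the right framework, and your approximation argument for the inequality $\norm{\Phi(t)u}\le\norm{u}$ is correct (though redundant: Bessel's inequality for the orthonormal system $\{V^k\1\}$ already gives this directly, without any limiting procedure). The gap is in what you identify as the target and the tool for the equality.

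First, your ``if and only if'' is too strong: equality in Bessel holds precisely when $\eta$ lies in the closed span of $\{V^k\1\}$, i.e.\ in $X_{\rm cnu}$, not only when $\{V^k\1\}$ is a basis of all of $H^2$. The paper does \emph{not} prove that $V$ (equivalently $\Sigma=e^{itH_u^2}Se^{-it\wt H_u^2}$, which is unitarily conjugate to $V$) is a pure shift, and it is not clear that the unitary part $X_{\rm u}$ is trivial in general. What is actually shown is the weaker, sufficient statement that the specific vector $u$ lies in $X_{\rm cnu}$.

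Second, the scattering-theoretic input is not wave operators between $H_u^2$ and $\wt H_u^2$. The Kato--Rosenblum theorem is applied to the \emph{self-adjoint} operators $\Re\Sigma$ and $\Re S$ (whose difference is trace class because $e^{it\wt H_u^2}e^{-itH_u^2}-I$ is, by Duhamel and \eqref{c2}); comparing spectral multiplicities, one concludes that $\Sigma_{\rm u}$ has no absolutely continuous component. This alone does not kill $X_{\rm u}$.

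The missing idea is an auxiliary anti-linear operator $\calH:=e^{itH_u^2}H_u$ on $\Dom H_u$. It satisfies $\calH q=u$, the symmetry $\jap{\calH f,g}=\jap{\calH g,f}$, and the Hankel-type intertwining $\Sigma^*\calH=\calH\Sigma$. From the intertwining and the symmetry one obtains, for $f,g\in\Dom\calH$ and all $m\ge0$, the identity $\jap{\Sigma^m f,\calH g}=\jap{\Sigma^m g,\calH f}$; written via the spectral measures of $\Sigma$ in the Wold decomposition, this equates two analytic moment sequences, and the F.\ and M.~Riesz theorem forces the singular parts (coming from $\Sigma_{\rm u}$, which has no a.c.\ spectrum) to agree. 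This yields $\calH P_{\rm u}=P_{\rm u}\calH$ on $\Dom\calH$, so the Wold decomposition reduces $\calH$; since $q\in X_{\rm cnu}$ one gets $u=\calH q\in X_{\rm cnu}$, which is exactly the equality you need. Your ``compressive action of $S$ plus wave operators'' heuristic does not supply this mechanism.
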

Obviously, this proves Theorem~\ref{thm.a2}(i). 
We give the proof of Theorem~\ref{thm.c2} in Section~\ref{sec.d}. In the rest of this section we prove the remaining statements of Theorem~\ref{thm.a2} and demonstrate its sharpness. 

\subsection{Proof of Theorem~\ref{thm.a2}(ii)}
Let $\norm{u_n-u}\to0$ in $H^2$; then we also have $\norm{u_n^\sharp-u^\sharp}\to0$. By Theorem~\ref{thm.b2} we have the strong resolvent convergence 
$$
\Gamma_{u_n}\Gamma_{u_n}^*\to\Gamma_u\Gamma_u^*
$$
or equivalently, the strong resolvent convergence 
$$
H_{u_n}^2\to H_u^2. 
$$ 
This implies \cite[Theorem VIII.20]{RS1} that $f(H_{u_n}^2)\to f(H_u^2)$ strongly for all bounded continuous functions $f$. In particular, 
$$
\exp(-itH_{u_n}^2)\to\exp(-itH_{u}^2)
$$
in the strong operator topology as $n\to\infty$. Similarly (since $\norm{S^*u_n-S^*u}\to0$), we have
$$
\exp(-it\wt H_{u_n}^2)\to\exp(-it\wt H_{u}^2)
$$
in the strong operator topology. It follows that $(\Phi(t)u_n)(z)\to(\Phi(t)u)(z)$  for any $\abs{z}<1$ and therefore $\Phi(t)u_n\to\Phi(t)u$ weakly in $H^2$. 
On the other hand, by Theorem~\ref{thm.c2}, we have 
$$
\norm{\Phi(t)u_n}=\norm{u_n}\to\norm{u}=\norm{\Phi(t)u},
$$
and so we conclude that $\norm{\Phi(t)u_n-\Phi(t)u}\to0$ as $n\to\infty$. 
\qed

\subsection{Proof of Theorem~\ref{thm.a2}(iii)}
Suppose $t_n\to t$ as $n\to\infty$. As in the proof of part (ii) of the theorem, using the strong convergence argument we find that $(\Phi(t_n)u)(z)\to(\Phi(t)u)(z)$  for any $\abs{z}<1$ and therefore $\Phi(t_n)u\to\Phi(t)u$ weakly in $H^2$. On the other hand, by Theorem~\ref{thm.c2},
$$
\norm{\Phi(t_n)u}=\norm{\Phi(t)u},
$$
and so  $\norm{\Phi(t_n)u-\Phi(t)u}\to0$ as $n\to\infty$. 
\qed

\subsection{Proof of Theorem~\ref{thm.a2}(iv)}

For every $u\in H^2$ and for every nonnegative real number $x$, let us consider
$$J(x,u):=\jap{(I+xH_u^2)^{-1}\1,\1}\ .$$
The function $x\mapsto J(x,u)$ is $C^\infty$ on $(0,\infty)$ with 
$$
\partial_x J(x,u)=-\jap{(I+xH_u^2)^{-2}H_u^2\1,\1}=-\norm{H_u(I+xH_u^2)^{-1}\1}^2.
$$
From here we see that this function is also $C^1$ up to $x=0$ with
$$\partial_xJ(0_+,u)=-\norm{H_u\1}^2=-\norm{u}^2.$$
Furthermore, for $x>0$,
\begin{align*}
\partial_x^2J(x,u)&=2\jap{H_u^4(I+xH_u^2)^{-3}\1,\1}
=
2\norm{H_u(I+xH_u^2)^{-3/2}H_u\1}^2
\\
&=
2\norm{H_u(I+xH_u^2)^{-3/2}u}^2.
\end{align*}
Observe that $u\in\Dom H_u$ if and only if 
$$
\sup_{x>0}\norm{H_u(I+xH_u^2)^{-3/2}u}^2<\infty,
$$
i.e. if and only if the second derivative $\partial_x^2J(x,u)$ remains bounded as $x\to0_+$, and in this case we have
$$
\partial_x^2J(0_+,u)=2\norm{H_uu}^2.
$$
On the other hand, by the definition of $\Dom H_u$, we have that $u\in\Dom H_u$ if and only if $\bbP(|u|^2)\in H^2$. Computing the norm  in terms of the Fourier coefficients, we find
$$
2\norm{\bbP(|u|^2)}^2
=2\sum_{n=0}^\infty \abs{\jap{\abs{u}^2,z^n}}^2
=\sum_{n=-\infty}^\infty \abs{\jap{\abs{u}^2,z^n}}^2
+\abs{\jap{\abs{u}^2,\1}}^2
=\norm{u}_{H^4}^4+\norm{u}^4
$$
and so $u\in\Dom H_u$ if and only if $u\in H^4$, and in this case
$$
2\norm{H_uu}^2=\norm{u}_{H^4}^4+\norm{u}^4.
$$
We conclude that $u\in H^4$ if and only if the second derivative $\partial_x^2J(x,u)$ remains bounded as $x\to0_+$, and in this case
\begin{equation}
\norm{u}_{H^4}^4
=
\partial_x^2J(0_+,u)-(\partial_xJ(0_+,u))^2.
\label{c9}
\end{equation}
\\
Next, if $u$ is smooth, it is known from the Lax pair structure that $J(x,u)$ is a conservation law of the cubic Szeg\H{o} equation (see e.g. \cite[Corollary~3]{GG00}). Moreover, we already noticed that, if $u_n\to u$ in $H^2$, then by Theorem~\ref{thm.b2} we have $H_{u_n}^2\to H_u^2$ in the sense of the strong convergence of resolvents. This means that $u\mapsto J(x,u)$ is a continuous functional on $H^2$. In view  of Theorem~\ref{thm.a2}(ii), we infer that, for every $u\in H^2$, 
$$J(x,\Phi (t)u)=J(x,u)\ .$$
Now take $u\in H^4$; then from \eqref{c9} we conclude that $\Phi (t)u\in H^4$ and that
$$\norm{\Phi(t)u}_{H^4}=\norm{u}_{H^4}\ .$$
The proof of Theorem~\ref{thm.a2}(iv) is complete. \qed

\subsection{Sharpness of Theorem~\ref{thm.a2}}

%%%%%%%%%%%%%%%%%%%%
\begin{proposition}\label{prp.a1}
%%%%%%%%%%%%%%%%%%%%
For every $\delta >0$, there exists a sequence $(v^\eps )$ of smooth (in fact, rational) solutions of the cubic Szeg\H{o} equation on the circle and a sequence $(t^\eps )$ of positive times such that, as $\eps \to 0$, 
$$t^\eps \to 0\ ,\ \| v^\eps (0)\| _{W^{-\delta ,2}}\to 0\ ,\ |\jap{v^\eps (t^\eps ),\1}|\to +\infty \ .$$
\end{proposition}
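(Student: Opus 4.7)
My plan is to exhibit explicit polynomial (hence rational, hence smooth) initial data of the form
\[
v^\varepsilon(0,z) = a_\varepsilon + A_\varepsilon z^{N_\varepsilon},
\]
with parameters $a_\varepsilon>0$, $A_\varepsilon>0$, $N_\varepsilon\in\bbN$ to be tuned. Since this initial datum is a polynomial, $H_{v^\varepsilon(0)}$ has finite rank; the explicit formula \eqref{c3} then involves only finite-rank perturbations of the identity, and yields a rational (in particular smooth) function of $z$ at every time $t$. The parameters will be chosen so that $\|v^\varepsilon(0)\|_{W^{-\delta,2}}\to 0$ and $t^\varepsilon\to 0$, yet $|\jap{v^\varepsilon(t^\varepsilon),\1}|\to\infty$.

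The heart of the argument is a two-dimensional reduction. For $u=a+Az^N$, direct computation gives $H_u\1=u=a\1+Az^N$ and $H_u z^N=\bbP(u\bar z^N)=A\1$, so the subspace $E:=\mathrm{span}\{\1,z^N\}$ is invariant under the antilinear map $H_u$ and therefore under the linear operator $H_u^2$. Using the antilinearity of $H_u$, the matrix of $H_u^2\bigm|_E$ in the basis $\{\1,z^N\}$ is the positive Hermitian matrix
\[
M = \begin{pmatrix} |a|^2+|A|^2 & a\bar A \\ \bar a A & |A|^2 \end{pmatrix}.
\]
Since $u\in E$, evaluating the explicit formula \eqref{c3} at $z=0$ gives
\[
\jap{\Phi(t)u,\1} = \jap{e^{-itH_u^2}u,\1} = \bigl(e^{-itM}(a,A)^\top\bigr)_1.
\]

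The eigenvalues of $M$ are $\lambda_\pm=\tfrac12\bigl(|a|^2+2|A|^2\pm|a|\sqrt{|a|^2+4|A|^2}\bigr)$. Setting $\bar\lambda=(\lambda_++\lambda_-)/2$, $\omega=(\lambda_+-\lambda_-)/2=|a|\sqrt{|a|^2+4|A|^2}/2$, and $\sigma:=(M-\bar\lambda I)/\omega$, one checks directly that $\sigma^2=I$; for $a>0$ real, the resulting computation yields
\[
\bigl(e^{-itM}(a,A)^\top\bigr)_1 = e^{-it\bar\lambda}\left[a\cos(t\omega)-i\sin(t\omega)\,\frac{|a|^2+2|A|^2}{\sqrt{|a|^2+4|A|^2}}\right].
\]
Choosing $t^\varepsilon:=\pi/(2\omega)$, the modulus of the right-hand side equals $(|a|^2+2|A|^2)/\sqrt{|a|^2+4|A|^2}$, and a short algebraic check (squaring the inequality) shows this quantity is at least $|A|/2$ for all $a,A$.

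To conclude, I take $a_\varepsilon=N^{-\alpha}$, $A_\varepsilon=N^\beta$, $N_\varepsilon=N$ for fixed exponents $0<\alpha<\beta<\delta$ (for instance $\alpha=\delta/3$, $\beta=2\delta/3$), indexing the sequence by $N\to\infty$. Then $\|v^\varepsilon(0)\|_{W^{-\delta,2}}^2 \leq N^{-2\alpha}+N^{2(\beta-\delta)}\to 0$; the frequency obeys $\omega\sim N^{\beta-\alpha}$, whence $t^\varepsilon\sim N^{\alpha-\beta}\to 0$; and $|\jap{v^\varepsilon(t^\varepsilon),\1}|\geq N^\beta/2\to\infty$. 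The only step requiring genuine care is verifying the invariant-subspace observation and carrying out the $2\times 2$ diagonalisation; once those are in place, the parameter tuning is routine.
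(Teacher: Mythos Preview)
Your proof is correct. It uses essentially the same family of initial data as the paper --- the paper's rescaled datum $R^\varepsilon(z^{N^\varepsilon}+\varepsilon)=R^\varepsilon\varepsilon+R^\varepsilon z^{N^\varepsilon}$ is exactly of your form $a+Az^N$ with $a=R^\varepsilon\varepsilon$, $A=R^\varepsilon$ --- but the routes to evaluating $\jap{\Phi(t)u,\1}$ differ. The paper invokes the two scaling symmetries $u\mapsto Ru(R^2t,\cdot)$ and $u\mapsto u(\cdot,z^N)$ together with the full explicit rational solution from \cite{GG2} for the datum $z+\varepsilon$, and reads off the constant term $b^\varepsilon(t)$ from that formula. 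You instead evaluate the general explicit formula \eqref{c3} at $z=0$, which collapses to $\jap{e^{-itH_u^2}u,\1}$, and reduce this to the exponential of a $2\times2$ matrix on the $H_u$-invariant plane $\mathrm{span}\{\1,z^N\}$; the resulting expression and the critical time $\pi/(2\omega)$ match the paper's $b^\varepsilon$ and $T^\varepsilon$ exactly (up to the relabelling of parameters). Your approach is more self-contained --- it needs only \eqref{c3}, already present in the paper, and a direct diagonalisation --- and isolates precisely what is required (only the zeroth Fourier coefficient, only the two eigenvalues of $H_u^2\big|_E$), whereas the paper's route makes the scaling structure and the rational form of the entire trajectory more visible. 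One small remark: your one-line justification that the solution is rational (``finite-rank perturbations of the identity'') is not quite the right mechanism; the clean argument is that the Lax pair preserves $\rank H_u$, which is finite for polynomial data, and Kronecker's theorem then identifies finite-rank Hankel symbols with rational functions.
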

\begin{remark*}
Since $|\jap{v,\1}|\leq \| v\| _{W^{-\delta ,2}}$, this establishes immediate norm inflation in $W^{-\delta ,2}(\bbT)$ for the cubic Szeg\H{o} evolution. A slight modification of the argument below shows that, for every $t_0>0$, there exists a sequence $(v^\eps )$ of smooth solutions such that 
$$ \| v^\eps (0)\| _{W^{-\delta ,2}}\to 0\ ,\ |\jap{v^\eps (t_0 ),\1}|\to +\infty \ .$$
In particular, the  cubic Szeg\H{o}  flow map $\Phi(t_0)$ cannot be continuously extended to a map on $W^{-\delta ,2}(\bbT)$, nor even to a map from $W^{-\delta ,2}(\bbT)$ to the space of distributions $\mathcal D'(\bbT)$.  
\end{remark*} 

\begin{proof}
First of all we observe the following two scaling properties of the cubic Szeg\H{o} equation \eqref{a4}. If $u=u(t,z)$ is a solution of \eqref{a4}, then for every $R>0$ and for every positive integer $N$,
$$u_{R,N}(t,z):=Ru(R^2t,z^N)$$
is also a solution of the cubic Szeg\H{o} equation. Next, we refer to Section 4 of \cite{GG2} where the solution $u^\eps $ of \eqref{a4} 
such that $u^\eps (0,z)=z+\eps $ is calculated for every $\eps >0$. One gets
$$u^\eps (t,z)=\frac{a^\eps (t)z+b^\eps (t)}{1-p^\eps (t)z}\ .$$
Here we are only interested in the expression of $b^\eps (t)$, which is 
$$b^\eps (t)={\rm e}^{-it(1+\eps^2/2)}\left ( \eps \cos (\omega t)-\frac{2+\eps ^2}{\sqrt {4+\eps ^2}} \sin (\omega t)   \right )\ ,\ \omega :=\frac{\eps }{2}\sqrt{4+\eps ^2}\ .$$
Set 
$$T^\eps :=\frac{\pi }{2\omega }\sim \frac{\pi}{2\eps }\ .$$
Then the above formula implies
$$|b^\eps (T^\eps )|=\frac{2+\eps ^2}{\sqrt {4+\eps ^2}}\to 1\ .$$
Choose a sequence $(R^\eps )$ of positive numbers such that 
$$R^\eps \eps \to 0\ ,\ (R^\eps )^2 \eps \to +\infty \ ,$$
and a sequence $(N^\eps )$ of positive integers such that 
$$(N^\eps )^{-\delta} R^\eps \to 0\ .$$
Set 
$$v^\eps (t,z)=R^\eps u^\eps \left ((R^\eps )^2t, z^{N^\eps }\right )\ ,\ t^\eps := \frac{T^\eps }{(R^\eps )^2 } $$
Then $v^\eps $ is a solution of \eqref{a4} and 
\begin{eqnarray*}
 &&\| v^\eps (0)\| _{W^{-\delta ,2}} ^2= (N^\eps )^{-2\delta} (R^\eps)^2+(R^\eps \eps )^2\to 0\ ,\\  && |\jap {v^\eps (t^\eps ),\1}|= R^\eps |\jap{u^\eps (T^\eps ),\1}|=R^\eps |b^\eps (T^\eps )|\to +\infty \ ,
 \end{eqnarray*}
while
$$t^\eps \sim \frac{\pi}{2(R^\eps )^2 \eps }\to 0\ .$$
\end{proof}

%%%%%%%%%%%%%%%%%%%%%%%%%%%%%%%%%%%%%%%%%%%%%%%%%
%%%%%%%%%%%%%%%%%%%%%%%%%%%%%%%%%%%%%%%%%%%%%%%%%
\section{Proof of Theorem~\ref{thm.c2}}\label{sec.d}
%%%%%%%%%%%%%%%%%%%%%%%%%%%%%%%%%%%%%%%%%%%%%%%%%
%%%%%%%%%%%%%%%%%%%%%%%%%%%%%%%%%%%%%%%%%%%%%%%%%

%%%%%%%%%%%%%%%%%%%%%%%%%%
\subsection{Operator theoretic preliminaries}
%%%%%%%%%%%%%%%%%%%%%%%%%%
Let $\Sigma$ be a contraction on a Hilbert space, i.e. an operator with the norm $\leq 1$. 
The \emph{defect indices} of $\Sigma$ is the (ordered) pair of integers
$$
\bigl(\rank(I-\Sigma^*\Sigma), \rank(I-\Sigma\Sigma^*)\bigr).
$$
In particular, any unitary operator has the defect indices $(0,0)$, any isometry has the defect indices $(0,k)$ with $k\geq0$ and the shift operator $S$ has the defect indices $(0,1)$.

A contraction $\Sigma$ is called \emph{completely non-unitary} (c.n.u.), if $\Sigma$ is not unitary on any of its invariant subspaces. 
The following result is a particular case of the \emph{Wold decomposition} of an isometry (see e.g. \cite[Theorem I.1.1]{RN}).

%%%%%%%%%%%%%%%
\begin{theorem}[Wold decomposition]\label{thm.d1}
%%%%%%%%%%%%%%%
Let $\Sigma$ be an isometry on a Hilbert space $X$ with defect indices $(0,1)$, and let $q\in\Ran(I-\Sigma\Sigma^*)$, $q\not=0$. 
Then $X$ can be represented as an orthogonal sum 
$X=X_{\rm u}\oplus X_{\rm cnu}$, such that 
\begin{equation}
\Sigma=\begin{pmatrix} 
\Sigma_{\rm u} & 0
\\
0 & \Sigma_{\rm cnu}
\end{pmatrix}
\quad \text{ in $X_{\rm u}\oplus X_{\rm cnu}$,}
\label{d1}
\end{equation}
where $\Sigma_{\rm u}$ is unitary and $\Sigma_{\rm cnu}$ is c.n.u. 
Moreover, $X_{\rm cnu}$ coincides with the closed linear span of $\{\Sigma^m q\}_{m=0}^\infty$ and $\Sigma_{\rm cnu}$ is unitarily equivalent to the shift operator $S$ on $H^2$. 
\end{theorem}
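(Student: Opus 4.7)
The plan is to construct $X_{\rm cnu}$ directly as the closed span of $\{\Sigma^m q\}_{m=0}^\infty$, identify it with $H^2$ via an intertwining unitary, and then take $X_{\rm u}$ to be its orthogonal complement; the work is then to check that both summands reduce $\Sigma$ and that the restriction to $X_{\rm u}$ is unitary.

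First I would normalize the setup. Since $\Sigma$ is an isometry, $\Sigma^*\Sigma=I$, so $I-\Sigma\Sigma^*$ is the orthogonal projection onto $(\Ran\Sigma)^\perp=\Ker\Sigma^*$; the assumption that the second defect index is $1$ means this kernel is one-dimensional, and $q$ spans it. Rescaling if necessary (which does not affect the span $\{\Sigma^m q\}_{m\geq 0}$), I assume $\norm{q}=1$. I then verify that $\{\Sigma^m q\}_{m=0}^\infty$ is orthonormal: the norm is preserved because $\Sigma$ is an isometry, and for $n>m$ the identity
$$
\jap{\Sigma^m q,\Sigma^n q}=\jap{q,\Sigma^{n-m}q}=\jap{\Sigma^*q,\Sigma^{n-m-1}q}=0
$$
follows from $\Sigma^*\Sigma=I$ and $\Sigma^*q=0$. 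Setting $X_{\rm cnu}:=\overline{\mathrm{span}}\{\Sigma^mq:m\geq 0\}$, the assignment $\Sigma^mq\mapsto z^m$ extends to a unitary $U:X_{\rm cnu}\to H^2$, and by construction $U\Sigma|_{X_{\rm cnu}}U^*=S$. This already delivers the unitary equivalence of $\Sigma_{\rm cnu}$ with $S$, and in particular $\Sigma_{\rm cnu}$ is c.n.u.\ because $S$ on $H^2$ is (a standard fact: any reducing subspace of $S$ is $\{0\}$ or $H^2$).

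Next I set $X_{\rm u}:=X\ominus X_{\rm cnu}$ and check that both summands reduce $\Sigma$. Invariance of $X_{\rm cnu}$ under $\Sigma$ is immediate from the definition. For invariance under $\Sigma^*$, note $\Sigma^*q=0\in X_{\rm cnu}$ and $\Sigma^*\Sigma^mq=\Sigma^{m-1}q\in X_{\rm cnu}$ for $m\geq 1$, which extends to all of $X_{\rm cnu}$ by continuity of $\Sigma^*$. Hence $X_{\rm u}$ is invariant under both $\Sigma$ and $\Sigma^*$, yielding the block-diagonal form \eqref{d1}.

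The main step I expect to be the slightly delicate one is showing that $\Sigma_{\rm u}$ is not merely isometric but \emph{unitary}; this is really the heart of Wold's theorem. Since $\Sigma_{\rm u}$ inherits $\Sigma_{\rm u}^*\Sigma_{\rm u}=I_{X_{\rm u}}$ from $\Sigma$, the question is whether $\Ker\Sigma_{\rm u}^*=\{0\}$. If $x\in X_{\rm u}$ satisfies $\Sigma^*x=0$, then $x\in\Ker\Sigma^*=\bbC q$; but $q\in X_{\rm cnu}$ and $x\in X_{\rm u}=X_{\rm cnu}^\perp$, forcing $x=0$. Therefore $\Sigma_{\rm u}\Sigma_{\rm u}^*=I_{X_{\rm u}}$ as well, so $\Sigma_{\rm u}$ is unitary, completing the proof.
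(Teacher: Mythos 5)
The paper does not prove this statement; it invokes it as a known result, citing Theorem I.1.1 of Sz.-Nagy--Foias et al.\ \cite{RN}. Your proof is a correct and self-contained derivation, and it is essentially the standard proof of the Wold decomposition specialized to the case where $\Ker\Sigma^*$ is one-dimensional. All the key steps are sound: the orthonormality of $\{\Sigma^m q\}_{m\geq 0}$ follows from $\Sigma^*\Sigma=I$ and $\Sigma^*q=0$; the unitary $U$ sending $\Sigma^m q\mapsto z^m$ correctly intertwines $\Sigma|_{X_{\rm cnu}}$ with $S$; the reducing property of $X_{\rm cnu}$ is checked on the spanning set; and the argument that $\Sigma_{\rm u}$ is unitary (because any $x\in X_{\rm u}\cap\Ker\Sigma^*$ would have to be a multiple of $q\in X_{\rm cnu}$, forcing $x=0$, so the range of the isometry $\Sigma_{\rm u}$ is all of $X_{\rm u}$) is exactly right. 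One small remark: you could make the identification $X_{\rm u}=\bigcap_{n\geq 0}\Sigma^n X$ explicit, which is how the Wold decomposition is usually stated in textbooks and which is sometimes useful; but for the purposes of this paper your construction of $X_{\rm cnu}$ first and $X_{\rm u}$ as its complement is the more efficient route, since the paper only uses the characterization of $X_{\rm cnu}$ as the span of $\{\Sigma^m q\}$.
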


We will also need the Kato-Rosenblum theorem, see e.g. \cite[Theorem~X-4.3]{Kato}.

%%%%%%%%%%%%%%%%%%%
\begin{theorem}[Kato-Rosenblum]
%%%%%%%%%%%%%%%%%%%
Let $A$ and $B$ be self-adjoint operators in a Hilbert space $X$ such that the difference $A-B$ is trace class. Then the absolutely continuous parts of $A$ and $B$ are unitarily equivalent. 
\end{theorem}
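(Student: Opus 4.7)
The plan is to prove the theorem via the standard scattering-theoretic route: construct the wave operators
$$W_\pm := \text{s-lim}_{t\to\pm\infty} e^{itA}e^{-itB}P_{\rm ac}(B),$$
show they are partial isometries from $\Ran P_{\rm ac}(B)$ onto $\Ran P_{\rm ac}(A)$, and observe the intertwining $e^{isA}W_\pm = W_\pm e^{isB}$ for all $s\in\bbR$. By the functional calculus, either wave operator then implements the required unitary equivalence between the absolutely continuous parts of $A$ and $B$.

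For existence of $W_\pm$, I would set $V:=A-B$ and start from Cook's formula
$$e^{itA}e^{-itB}P_{\rm ac}(B)f = P_{\rm ac}(B)f+i\int_0^t e^{i\tau A}Ve^{-i\tau B}P_{\rm ac}(B)f\,d\tau,$$
which reduces matters to controlling the integral as $t\to\pm\infty$. The central analytic input is a smoothness estimate: since $V$ is self-adjoint and trace class, factor $V=C^*JC$ with $C$ Hilbert--Schmidt and $J$ bounded self-adjoint on an auxiliary space, and prove
$$\int_{-\infty}^\infty \norm{Ce^{-i\tau B}P_{\rm ac}(B)f}^2\,d\tau \leq \pi\,\norm{C}_{\mathrm{HS}}^2\, \norm{f}^2.$$
Expanding $C$ along an orthonormal basis reduces this to the rank-one case $C=\jap{\cdot,\varphi}$, where the inequality becomes the Plancherel identity applied to the (absolutely continuous) spectral measure of $B$ associated with $\varphi$.

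Armed with the above $L^2$-in-time bound, valid for both $B$ and $A$ on the same class of vectors, a Cauchy--Schwarz argument controls
$$\Norm{\int_s^t e^{i\tau A}Ve^{-i\tau B}P_{\rm ac}(B)f\,d\tau}^2$$
and shows this quantity vanishes as $s,t\to\pm\infty$, giving existence of $W_\pm$ as strong limits on the dense set of such $f$, hence on $\Ran P_{\rm ac}(B)$ after extension by continuity. By construction, $W_\pm$ are isometries with range contained in $\Ran P_{\rm ac}(A)$, and the intertwining property is immediate from the group structure.

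Completeness, namely $\Ran W_\pm = \Ran P_{\rm ac}(A)$, follows from the symmetry of the hypothesis: since $B-A$ is also trace class, the same argument produces $W_\pm(B,A)$ as isometries on $\Ran P_{\rm ac}(A)$, and the composition identity $W_\pm(A,B)W_\pm(B,A)=P_{\rm ac}(A)$ forces surjectivity. The main technical obstacle is the smoothness estimate in the second paragraph; without it the passage from the trace-class hypothesis to any quantitative integrability statement is impossible, and everything else in the proof is standard scattering-theoretic bookkeeping.
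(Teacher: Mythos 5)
The paper does not prove this statement: it cites the Kato--Rosenblum theorem as a standard result (Theorem~X-4.3 of Kato's book), so there is no internal proof to compare against. As for your outline, the global framework (wave operators, Cook's method, completeness by symmetry and the chain rule) is the right one, but the analytic core is false. The estimate
$$\int_{-\infty}^\infty \norm{Ce^{-i\tau B}P_{\rm ac}(B)f}^2\,d\tau \leq \pi\,\norm{C}_{\mathrm{HS}}^2\,\norm{f}^2$$
does not hold. Take $X=L^2(\bbR)$ with $B$ multiplication by the variable, so $P_{\rm ac}(B)=I$; let $C=\jap{\cdot,\varphi}\varphi$ with $\norm{\varphi}=1$ (hence $\norm{C}_{\mathrm{HS}}=1$) and $f=\varphi$. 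Then $\jap{e^{-i\tau B}\varphi,\varphi}$ is the Fourier transform of $\abs{\varphi}^2$, and Plancherel gives $\int_{-\infty}^\infty\abs{\jap{e^{-i\tau B}\varphi,\varphi}}^2\,d\tau=2\pi\int_\bbR\abs{\varphi(\lambda)}^4\,d\lambda$, which is unbounded over unit vectors $\varphi$ (take $\varphi=\eps^{-1/2}\1_{[0,\eps]}$, $\eps\to0$), while your claimed bound stays equal to $\pi$. What the Plancherel computation actually yields in the rank-one case is $2\pi\int\abs{d\jap{E_B(\lambda)f,\varphi}/d\lambda}^2\,d\lambda$; this is controlled by the essential supremum of the spectral density of $f$, not by $\norm{f}^2$, and your write-up conflates the cross-spectral measure of $f$ and $\varphi$ with the spectral measure of $\varphi$ alone.

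This is not cosmetic: the failure of uniformity in $f$ is exactly the difficulty the Kato--Rosenblum theorem has to overcome, and without it the passage from the trace-class hypothesis to existence of $W_\pm$ is precisely the missing content. Standard repairs: (a) restrict first to the dense set $\mathcal M\subset\Ran P_{\rm ac}(B)$ of vectors with spectral density bounded by some $M_f$, for which the correct bound $\int\norm{Ce^{-i\tau B}f}^2\,d\tau\leq 2\pi M_f\norm{C}_{\mathrm{HS}}^2$ holds, prove existence of $W_\pm f$ there, and then extend by continuity using that $e^{itA}e^{-itB}$ is isometric on $\Ran P_{\rm ac}(B)$; or (b) prove the rank-one case directly on $\mathcal M$, pass to finite rank by the chain rule, and handle trace class by an approximation/stability-of-wave-operators argument. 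Either way there is a genuine density or approximation layer that your proposal omits, and the Cook-plus-Cauchy--Schwarz step as you describe it does not close even after correcting the constant, since the companion factor involving $e^{-i\tau A}$ is likewise not controlled uniformly over the unit ball in the duality argument.
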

We will denote the unitary equivalence of the absolutely continuous parts of $A$ and $B$ by writing $A^{\rm ac}\simeq B^{\rm ac}$.

%%%%%%%%%%%%%%%%%%%%%%%%%%
\subsection{The operator $\Sigma$}
%%%%%%%%%%%%%%%%%%%%%%%%%%
We start by introducing an auxiliary operator. We fix $u\in H^2$ and $t\in\bbR$ for the remainder of the proof. Let
$$
\Sigma=e^{itH_u^2}Se^{-it\wt H_u^2}, \quad q=e^{itH_u^2}\1.
$$
Observe that $\Sigma$ satisfies 
\begin{equation}
\Sigma^*\Sigma=I, \quad \Sigma\Sigma^*=I-\jap{\cdot,q}q, 
\label{d2}
\end{equation}
and $\norm{q}=1$ (compare with \eqref{c1}). 
In particular, the defect indices of $\Sigma$ are $(0,1)$. 

With this notation, for any $\abs{z}<1$ we have
\begin{align*}
(I-ze^{-itH_u^2}e^{it\wt H_u^2}S^*)^{-1}e^{-itH_u^2}
&=
e^{-itH_u^2}(I-ze^{it\wt H_u^2}S^*e^{-it H_u^2})^{-1}
\\
&=
e^{-itH_u^2}(I-z\Sigma^*)^{-1},
\end{align*}
and therefore
\begin{align*}
(\Phi(t)u)(z)
&=
\jap{e^{-itH_u^2}(I-z\Sigma^*)^{-1}u,\1}
=
\jap{(I-z\Sigma^*)^{-1}u,q}
\\
&=\sum_{m=0}^\infty z^m \jap{u,\Sigma^m q}\ .
\end{align*}
Our task is to show that
\begin{equation}
\sum_{m=0}^\infty \abs{\jap{u,\Sigma^m q}}^2=\norm{u}^2.
\label{d4}
\end{equation}
From \eqref{d2} we find that $\norm{\Sigma^m q}=\norm{q}=1$ for all $m$ and $\Sigma^*q=0$. 
It follows that for $m>n\geq0$
$$
\jap{\Sigma^m q,\Sigma^n q}=\jap{\Sigma^{m-n}q,q}=\jap{q,(\Sigma^*)^{m-n}q}=0,
$$
and therefore
$\{\Sigma^m q\}_{m=0}^\infty$ is an orthonormal set in the Hardy space $H^2$. 
Thus, proving \eqref{d4} reduces to showing that $u$ belongs to the closed linear span of this orthonormal set. 

Consider the Wold decomposition $H^2=X_{\rm u}\oplus X_{\rm cnu}$ of $\Sigma$, see Theorem~\ref{thm.d1}. We see that $X_{\rm cnu}$ coincides with the closed linear span of $\{\Sigma^m q\}_{m=0}^\infty$. Thus, we need to show that $u\in X_{\rm cnu}$. 

Our plan of the proof is as follows. First we show that the unitary part $\Sigma_{\rm u}$ has no absolutely continuous part. Then, by using the previous step and a commutation relation of $\Sigma$ with an auxiliary Hankel-like operator $\calH$, we show that $u\perp X_{\rm u}$ and therefore $u\in X_{\rm cnu}$. 

\subsection{$\Sigma_{\rm u}$ has no absolutely continuous part}
Recall that by \eqref{c2} the difference $H_u^2-\wt H_u^2$ is a rank one operator. By the Duhamel formula, 
$$
e^{it\wt H_u^2}e^{-itH_u^2}-I
=
\int_0^t e^{is\wt H_u^2}(i\wt H_u^2-iH_u^2)e^{-isH_u^2}ds
=
-i\int_0^t e^{is\wt H_u^2}\bigl(\jap{\cdot,u}u\bigr)e^{-isH_u^2}ds
$$
is a trace class operator. Denoting 
$$
\Sigma_0=e^{itH_u^2}Se^{-itH_u^2}, 
$$
it follows that the operator
$$
\Sigma-\Sigma_0
=
e^{itH_u^2}Se^{-it\wt H_u^2}(I-e^{it\wt H_u^2}e^{-itH_u^2})
$$
is trace class. We will need the following abstract lemma.

\begin{lemma}\label{lma.dd1}
Let $\Sigma_0$ be a contraction on a Hilbert space $X$ such that $\Sigma_0$ is unitarily equivalent to the shift operator $S$ on $H^2$. Let $\Sigma$ be another contraction on $X$ with defect indices $(0,1)$. Assume that $\Sigma-\Sigma_0$ is trace class; then the unitary part of $\Sigma$ in the Wold decomposition \eqref{d1} has no absolutely continuous component. 
\end{lemma}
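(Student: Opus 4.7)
The plan is to apply the Kato-Rosenblum theorem to the self-adjoint real parts $\Re\Sigma=(\Sigma+\Sigma^*)/2$ and $\Re\Sigma_0=(\Sigma_0+\Sigma_0^*)/2$, and then transfer the resulting spectral information about $\Re\Sigma_{\rm u}$ back to $\Sigma_{\rm u}$ by pushing through the map $\zeta\mapsto\Re\zeta$ on $\bbT$.

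Since $\Sigma-\Sigma_0$ is trace class, so is $\Re\Sigma-\Re\Sigma_0=\tfrac12\bigl((\Sigma-\Sigma_0)+(\Sigma-\Sigma_0)^*\bigr)$, and the Kato-Rosenblum theorem yields $(\Re\Sigma)^{\rm ac}\simeq (\Re\Sigma_0)^{\rm ac}$. To identify the right-hand side, I use $\Sigma_0\simeq S$ to get $\Re\Sigma_0\simeq(S+S^*)/2$; in the basis $\{z^n\}_{n=0}^\infty$ this is the free half-line Jacobi matrix with $a_n=\tfrac12$, $b_n=0$, whose spectrum is purely absolutely continuous on $[-1,1]$ with spectral multiplicity one.

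Next, I decompose $\Re\Sigma$ using the Wold decomposition of $\Sigma$. Since $\Sigma$ has defect indices $(0,1)$ and the unitary summand $\Sigma_{\rm u}$ contributes nothing to the defect, $\Sigma_{\rm cnu}$ is a c.n.u.\ isometry with defect indices $(0,1)$, hence unitarily equivalent to $S$ by Theorem~\ref{thm.d1}. Consequently $\Re\Sigma_{\rm cnu}$ is itself purely absolutely continuous on $[-1,1]$ with multiplicity one, and
$$
(\Re\Sigma)^{\rm ac}=(\Re\Sigma_{\rm u})^{\rm ac}\oplus \Re\Sigma_{\rm cnu}.
$$
Since $\Sigma_{\rm u}$ is unitary, $\Re\Sigma_{\rm u}$ has spectrum in $[-1,1]$, so any nontrivial $(\Re\Sigma_{\rm u})^{\rm ac}$ would lift the spectral multiplicity of $(\Re\Sigma)^{\rm ac}$ above one on a set of positive Lebesgue measure in $[-1,1]$, contradicting the Kato-Rosenblum equivalence with $(\Re\Sigma_0)^{\rm ac}$. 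Therefore $(\Re\Sigma_{\rm u})^{\rm ac}=0$.

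Finally I transfer the conclusion from $\Re\Sigma_{\rm u}$ to $\Sigma_{\rm u}$. Writing $\Sigma_{\rm u}=\int_\bbT z\,dE(z)$, the spectral measure of $\Re\Sigma_{\rm u}$ on $[-1,1]$ is the pushforward of $E$ under the Lipschitz map $\Re\colon \bbT\to[-1,1]$. Since Lipschitz maps send Lebesgue-null sets to Lebesgue-null sets, the pushforward of an absolutely continuous measure on $\bbT$ is absolutely continuous on $[-1,1]$; hence $(\Re\Sigma_{\rm u})^{\rm ac}=0$ forces the a.c.\ part of $E$ to vanish, so $\Sigma_{\rm u}$ has no absolutely continuous component. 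The most delicate step is the multiplicity bookkeeping in the paragraph above: it depends on knowing that the a.c.\ spectrum of the free half-line Chebyshev matrix has uniform multiplicity one, so that adjoining any nontrivial a.c.\ summand is detectable by Kato-Rosenblum.
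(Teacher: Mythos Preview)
Your proof is correct and follows essentially the same route as the paper's: apply Kato--Rosenblum to the real parts, identify $\Re S$ as the free half-line Jacobi matrix with simple purely a.c.\ spectrum on $[-1,1]$, use the Wold decomposition to write $(\Re\Sigma)^{\rm ac}\simeq(\Re\Sigma_{\rm u})^{\rm ac}\oplus\Re S$, compare multiplicities to force $(\Re\Sigma_{\rm u})^{\rm ac}=0$, and then push back from $\Re\Sigma_{\rm u}$ to $\Sigma_{\rm u}$ via $z\mapsto\Re z$. The only cosmetic difference is that you spell out the Lipschitz justification for the last step, whereas the paper simply invokes the functional calculus for normal operators.
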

\begin{proof}
Denoting $\Re \Sigma=(\Sigma+\Sigma^*)/2$, we find that the difference
$$
\Re\Sigma-\Re \Sigma_0
$$
is a trace class operator. By the Kato-Rosenblum Theorem, we obtain
$$
(\Re\Sigma)^{\rm ac}\simeq (\Re S)^{\rm ac}. 
$$
In the standard basis of the Hardy space $H^2$ the self-adjoint operator $\Re S$ can be written as the infinite Jacobi matrix
$$
\Re S\simeq
\frac12
\begin{pmatrix}
0&1&0&0&\cdots
\\
1&0&1&0&\cdots
\\
0&1&0&1&\cdots
\\
0&0&1&0&\cdots
\\
\vdots&\vdots&\vdots&\vdots&\ddots
\end{pmatrix}\ .
$$
It is well known that the spectrum of this Jacobi matrix is purely absolutely continuous, has multiplicity one and coincides with the interval $[-1,1]$. 
Thus, we find that 
\begin{equation}
(\Re\Sigma)^{\rm ac}\simeq \Re S. 
\label{d5}
\end{equation}
On the other hand, by the Wold decomposition \eqref{d1}
$$
(\Re\Sigma)^{\rm ac}
\simeq
(\Re\Sigma_{\rm u})^{\rm ac}\oplus(\Re\Sigma_{\rm cnu})^{\rm ac},
$$
where $\Sigma_{\rm cnu}$ is unitarily equivalent to $S$. From here we find
$$
(\Re\Sigma)^{\rm ac}
\simeq
(\Re\Sigma_{\rm u})^{\rm ac}\oplus\Re S.
$$
Combining this with \eqref{d5}, we find
$$
\Re S
\simeq
(\Re\Sigma_{\rm u})^{\rm ac}\oplus\Re S\ .
$$
The multiplicity function of the spectrum of the orthogonal sum on the right hand side equals the sum of the multiplicity functions of $(\Re\Sigma_{\rm u})^{\rm ac}$ and $\Re S$. Comparing with the multiplicity function of the left hand side, we conclude that $(\Re\Sigma_{\rm u})^{\rm ac}=0$. By the functional calculus for normal operators, the spectral family of the self-adjoint operator $\Re\Sigma_{\rm u}$ can be expressed as the push-forward of the spectral family of the unitary operator $\Sigma_{\rm u}$ by the map $z\mapsto \Re z$. From here it follows that $\Sigma_{\rm u}$ has no absolutely continuous part. 
\end{proof}

\subsection{The auxiliary operator $\calH$}
Here we introduce an auxiliary anti-linear operator whose properties mirror those of $H_u$. We set 
$$
\calH=e^{itH_u^2}H_u, \quad \Dom \calH=\Dom H_u.
$$
Since $H_u$ is closed, it is clear that $\calH$ is closed. Since $H_u$ is anti-linear and since obviously $H_u$ commutes with $H_u^2$, we can also write $\calH=H_ue^{-itH_u^2}$.  Using Corollary~\ref{cr.c2}, we find 
\begin{equation}
\jap{\calH f,g}
=
\jap{e^{itH_u^2}H_uf,g}
=
\jap{H_uf,e^{-itH_u^2}g}
=
\jap{H_ue^{-itH_u^2}g,f}
=
\jap{\calH g,f}
\label{d9}
\end{equation}
for all $f,g\in\Dom \calH$.
Moreover, directly from Theorem~\ref{thm.c1}(iii) we obtain the following statement:  if for some $f,h\in H^2$ we have
$$
\jap{\calH g,f}=\jap{h,g}, \quad \forall g\in \Dom \calH,
$$
then $f\in\Dom\calH$ and $\calH f=h$.

Bearing in mind the relations \eqref{c4}, we find
$$
\Sigma^*\calH
=
(e^{it\wt H_u^2}S^*e^{-itH_u^2})(e^{itH_u^2}H_u)
=
e^{it\wt H_u^2}S^*H_u=e^{it\wt H_u^2}\wt H_u=\wt H_ue^{-it\wt H_u^2}, 
$$
and similarly 
$$
\calH \Sigma
=
(H_ue^{-itH_u^2})(e^{itH_u^2}Se^{-it\wt H_u^2})
=
H_uSe^{-it\wt H_u^2}
=
\wt H_ue^{-it\wt H_u^2}
$$
on the domain of $\calH$. It follows that $\Sigma\Dom \calH\subset \Dom \calH$ and 
\begin{equation}
\calH\Sigma=\Sigma^*\calH.
\label{d8}
\end{equation}
Further, we have
$$
\calH q=(H_ue^{-itH_u^2})(e^{itH_u^2}\1)=H_u\1=u
$$
and in particular $q\in\Dom \calH$. 

Below in Lemma~\ref{lma.dd2} we show that the Wold decomposition of $\Sigma$ reduces $\calH$, i.e. for every $f\in X_{\rm cnu}\cap\Dom \calH$ we have $\calH f\in X_{\rm cnu}$. Since $q\in X_{\rm cnu}$, this will give the desired inclusion $u=\calH q\in X_{\rm cnu}$. Thus, Theorem~\ref{thm.c2} will follow from Lemma~\ref{lma.dd2}.

\subsection{The Wold decomposition of $\Sigma$ reduces $\calH$}
For further references (and also for conceptual clarity), we prefer to state the result we need in an abstract form. 

\begin{lemma}\label{lma.dd2}
Let $\Sigma$ be a contraction on a Hilbert space $X$ satisfying the hypotheses of Lemma~\ref{lma.dd1}. Let $\calH$ be an anti-linear operator on $X$ with a dense domain $\Dom \calH$ with the following properties: 
\begin{enumerate}[\rm (a)]
\item
symmetry
\begin{equation}
\jap{\calH f,g}=\jap{\calH g,f}, \quad \forall f,g\in\Dom \calH;
\label{dd2}
\end{equation}
\item
if for some $f,h\in X$ we have 
$$
\jap{\calH g,f}=\jap{h,g}, \quad \forall g\in\Dom\calH,
$$
then $f\in\Dom\calH$ and $\calH f=h$;
\item
$\Sigma\Dom\calH\subset\Dom\calH$ and 
\begin{equation}
\Sigma^*\calH=\calH\Sigma
\label{dd1}
\end{equation}
on the domain of $\calH$. 
\end{enumerate}
Then the Wold decomposition \eqref{d1} of $\Sigma$ reduces $\calH$, i.e. if 
$P_{\rm cnu}$  is the orthogonal projection in $X$ onto the subspace $X_{\rm cnu}$ in \eqref{d1}, then $P_{\rm cnu}\Dom\calH\subset\Dom\calH$ and $\calH P_{\rm cnu}(\Dom\calH)\subset X_{\rm cnu}$. 
\end{lemma}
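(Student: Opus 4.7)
My strategy is first to establish the image reduction $\calH(X_{\rm u}\cap\Dom\calH)\subset X_{\rm u}$ by a direct spectral argument using Lemma~\ref{lma.dd1}, and then to bootstrap this to the full statement via the duality property~(b) of $\calH$. For the first step, I derive the key operator identity
\[
\Sigma^*\calH^2\Sigma \;=\; \calH^2 - \langle\cdot,w\rangle w, \qquad w:=\calH q,
\]
on a dense subspace of $\Dom\calH^2$, where $q\in\Ran(I-\Sigma\Sigma^*)$ is the cyclic generator of $X_{\rm cnu}$ (lying in $\Dom\calH$ in the intended application). This identity follows from combining the intertwining $\calH\Sigma=\Sigma^*\calH$ with $\Sigma\Sigma^*=I-\langle\cdot,q\rangle q$, using that the symmetry~(a) makes $\calH^2$ a well-defined linear nonnegative self-adjoint operator. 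Iterating yields, for every $f\in\Dom\calH$, the Bessel identity
\[
\sum_{j=0}^{\infty}|\langle\Sigma^j f,w\rangle|^2 \;=\; \|P_{\rm cnu}\calH f\|^2.
\]

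Specializing to $f\in X_{\rm u}\cap\Dom\calH$, one has $\langle\Sigma^j f,w\rangle=\langle\Sigma_{\rm u}^j f,P_{\rm u}w\rangle$ because $X_{\rm u}$ reduces $\Sigma$, and these are the positive-indexed Fourier coefficients of the cross spectral measure $\mu_{f,P_{\rm u}w}$ associated with the unitary $\Sigma_{\rm u}$. Their square summability means the Cauchy integral of $\mu_{f,P_{\rm u}w}$ lies in $H^2(\bbD)\subset H^1(\bbD)$; by the F.~and M.~Riesz theorem, $\mu_{f,P_{\rm u}w}$ is absolutely continuous with respect to Lebesgue measure. On the other hand, Lemma~\ref{lma.dd1} forces the spectral measures of $\Sigma_{\rm u}$ to be purely singular with respect to Lebesgue, and hence so is the cross measure. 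A measure that is at the same time absolutely continuous and singular must vanish, so all terms in the Bessel sum are zero; in particular $\|P_{\rm cnu}\calH f\|^2=0$, i.e.\ $\calH f\in X_{\rm u}$.

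To bootstrap from this image reduction to the full lemma, and in particular to $P_{\rm u}\Dom\calH\subset\Dom\calH$ with $\calH P_{\rm u}=P_{\rm u}\calH$ on $\Dom\calH$, the duality property~(b) is essential: for $f\in\Dom\calH$ one computes $\langle\calH g,P_{\rm u}f\rangle=\langle P_{\rm u}\calH g,f\rangle$ and rewrites the right-hand side, using the symmetry~(a) and the image reduction applied to $P_{\rm u}\calH g\in X_{\rm u}$, in the form $\langle h,g\rangle$ for a suitable $h\in X$; property~(b) then yields $P_{\rm u}f\in\Dom\calH$ and $\calH P_{\rm u}f=h$. The main technical obstacle, in my view, is this bootstrap: identifying the correct $h$, and managing the domain compatibilities in the operator identity for $\calH^2$, since $\Sigma^*$ is not a priori $\Dom\calH$-invariant, so the iterates used to prove the Bessel identity require careful justification on dense subspaces of $\Dom\calH$ built from successive applications of $\Sigma$ to the core.
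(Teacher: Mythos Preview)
Your image-reduction step is essentially correct (granting $q\in\Dom\calH$, which is not among the abstract hypotheses but holds in the application): the Bessel identity and the F.~and M.~Riesz argument do give $\calH(X_{\rm u}\cap\Dom\calH)\subset X_{\rm u}$. The genuine gap is the bootstrap, and it is not merely technical. To invoke property~(b) for $P_{\rm u}f$ you need $\langle\calH g,P_{\rm u}f\rangle=\langle h,g\rangle$ for \emph{all} $g\in\Dom\calH$. Your route writes the left side as $\langle P_{\rm u}\calH g,f\rangle$ and proposes to ``apply image reduction to $P_{\rm u}\calH g$'', but image reduction concerns $\calH$ applied to elements of $X_{\rm u}\cap\Dom\calH$, and nothing tells you $P_{\rm u}\calH g\in\Dom\calH$. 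Any attempt to split $g$ into its $X_{\rm u}$ and $X_{\rm cnu}$ parts hits the same wall: you would need $P_{\rm u}g\in\Dom\calH$, which is exactly what is to be proved; and deriving the companion reduction $\calH(X_{\rm cnu}\cap\Dom\calH)\subset X_{\rm cnu}$ from yours via symmetry~(a) would require $X_{\rm u}\cap\Dom\calH$ to be dense in $X_{\rm u}$, also unavailable.

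The paper avoids this circularity by keeping two free variables from the outset: for arbitrary $f,g\in\Dom\calH$, the intertwining~\eqref{dd1} plus symmetry~\eqref{dd2} give $\langle\Sigma^m f,\calH g\rangle=\langle\Sigma^m g,\calH f\rangle$ for all $m\ge0$. Decomposing each side via the Wold splitting as $\int z^m\,d\nu+\int z^m\,d\mu$ with $\nu$ singular (by Lemma~\ref{lma.dd1}) and $\mu$ absolutely continuous (since $\Sigma_{\rm cnu}\simeq S$), F.~and M.~Riesz forces the singular parts to agree: $\nu_{f,\calH g}=\nu_{g,\calH f}$. Taking total mass, this reads $\langle\calH g,P_{\rm u}f\rangle=\langle P_{\rm u}\calH f,g\rangle$ for \emph{all} $g\in\Dom\calH$, which is precisely the input property~(b) requires---no bootstrap needed. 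Your Bessel identity is, in effect, this two-variable relation specialized to $g=q$, and that specialization discards exactly the information needed to close the argument.
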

\begin{proof}
First we need to discuss the spectral measures associated with $\Sigma$.
Let $P_{\rm u}$ (resp. $P_{\rm cnu}$) be the orthogonal projection in $X$ onto $X_{\rm u}$ (resp. $X_{\rm cnu}$) in the Wold decomposition \eqref{d1} of $\Sigma$. For $f,g\in X$ and $m\geq0$, consider
\begin{equation}
\jap{\Sigma^m f,g}
=
\jap{\Sigma_{\rm u}^m P_{\rm u}f,P_{\rm u}g}
+
\jap{\Sigma_{\rm cnu}^m P_{\rm cnu}f,P_{\rm cnu}g}.
\label{d6}
\end{equation}
We can write 
\begin{equation}
\jap{\Sigma_{\rm u}^m P_{\rm u}f,P_{\rm u}g}
=
\int_{\bbT} z^md\nu_{f,g}(z),
\label{d6a}
\end{equation}
where $\nu_{f,g}$ is the spectral measure of the unitary operator $\Sigma_{\rm u}$, associated with the pair of vectors $P_{\rm u}f,P_{\rm u}g$. By Lemma~\ref{lma.dd1}, the measure $\nu_{f,g}$ is singular with respect to the Lebesgue measure. 

In order to consider the second term in the r.h.s. of \eqref{d6}, we recall that by Theorem~\ref{thm.d1}, $\Sigma_{\rm cnu}$ is unitarily equivalent to the shift operator $S$ in the Hardy space $H^2$. The unitary map that effects this equivalence maps $P_{\rm cnu}f$ and $P_{\rm cnu}g$ to some elements $f_*,g_*\in H^2$ and so we can write
$$
\jap{\Sigma_{\rm cnu}^m P_{\rm cnu}f,P_{\rm cnu}g}
=
\int_{\bbT}z^m f_*(z)\overline{g_*(z)}d\lambda(z)
=
\int_{\bbT}z^m d\mu_{f,g}(z),
$$
where $d\mu_{f,g}$ is the absolutely continuous measure on $\bbT$ defined by the above identity. Putting this together, we find
\begin{equation}
\jap{\Sigma^m f,g}
=
\int_{\bbT} z^md\nu_{f,g}(z)
+
\int_{\bbT}z^m d\mu_{f,g}(z),\quad m\geq0,
\label{d7}
\end{equation}
where $d\nu_{f,g}$ is singular and $d\mu_{f,g}$ is absolutely continuous.

Iterating the commutation relation \eqref{dd1}, we find
$$
\calH \Sigma^m=(\Sigma^*)^m \calH
$$
for all $m\geq0$. 
Taking the bilinear form of this relation on elements $f,g\in\Dom\calH$, we find
$$
\jap{\calH \Sigma^m f,g}=\jap{(\Sigma^*)^m \calH f,g}=\jap{\calH f,\Sigma^m g}.
$$
Using the symmetry relation \eqref{dd2}, the left hand side here rewrites as
$$
\jap{\calH \Sigma^m f,g}=\jap{\calH g, \Sigma^m f}. 
$$
Putting this together and taking complex conjugates, we find 
$$
\jap{\Sigma^m f,\calH g}=\jap{\Sigma^m g,\calH f}. 
$$
By \eqref{d7}, this yields
$$
\int_{\bbT}z^{m}d\nu_{f,\calH g}(z)
+
\int_{\bbT}z^{m}d\mu_{f,\calH g}(z)
=
\int_{\bbT}z^{m}d\nu_{g,\calH f}(z)
+
\int_{\bbT}z^{m}d\mu_{g,\calH f}(z)
$$
for all $m\geq0$. By the F. and M. Riesz theorem, we obtain the equality of the singular components of the measure:
$$
\nu_{f,\calH g}=\nu_{g,\calH f} \ . 
$$
In particular, 
$$
\nu_{f,\calH g}(\bbT)=\nu_{g,\calH f}(\bbT);
$$
recalling the definition \eqref{d6a} of the measure $\nu_{f,g}$, we find
$$
\jap{P_{\rm u}f,P_{\rm u}\calH g}=\jap{P_{\rm u}g,P_{\rm u}\calH f}
$$
or equivalently (using that $P_{\rm u}^2=P_{\rm u}$ and taking complex conjugates)
$$
\jap{\calH g,P_{\rm u}f}=\jap{P_{\rm u}\calH f,g}.
$$
Now let us fix $f\in\Dom\calH$ and consider the above relation for all $g\in\Dom \calH$. 
By our hypothesis (b), this implies that $P_{\rm u}f\in\Dom\calH$ and $\calH P_{\rm u}f=P_{\rm u}\calH f$. 

Since $f=P_{\rm u}f+P_{\rm cnu}f$, by the linearity of $\Dom \calH$ we also have $P_{\rm cnu}f\in\Dom\calH$, and furthermore  $\calH P_{\rm cnu}f=P_{\rm cnu}\calH f$. 
\end{proof}

\appendix

%%%%%%%%%%%%%%%%%%%%%%%%%%%%
%%%%%%%%%%%%%%%%%%%%%%%%%%%%
\section{Toeplitz operator with $[T_\varphi ]_{\rm max}\ne [T_\varphi ]_{\rm min}$}
%%%%%%%%%%%%%%%%%%%%%%%%%%%%
%%%%%%%%%%%%%%%%%%%%%%%%%%%%
In this appendix, we reproduce the construction of Hartman--Wintner \cite{HaWi}, giving a Toeplitz operator with an unbounded real valued symbol $\varphi \in L^2(\bbT)$ such that $[T_\varphi ]_{\rm max}\ne [T_\varphi ]_{\rm min}$. 
\begin{lemma}\label{A1}
Let $\varphi \in L^2(\bbT)$ be real valued. If $[T_\varphi ]_{\rm max}= [T_\varphi ]_{\rm min}$, then this operator is selfadjoint.
\end{lemma}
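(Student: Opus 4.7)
The plan is to show that for any real-valued $\varphi \in L^2(\bbT)$ one has the general identity
$$[T_\varphi]_{\rm min}^* = [T_\varphi]_{\rm max}.$$
Once this is in hand, the hypothesis $[T_\varphi]_{\rm max}=[T_\varphi]_{\rm min}$ immediately yields $[T_\varphi]_{\rm min}^*=[T_\varphi]_{\rm min}$, which is the definition of selfadjointness. The argument closely parallels the adjoint computation for Hankel operators carried out in the proof of Theorem~\ref{thm.a1}(iii).

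First I would verify that $T_\varphi$ is symmetric on the space of polynomials. For polynomials $f,g$, both $\varphi f$ and $\varphi g$ lie in $L^2(\bbT)$ (since polynomials are bounded), so
$$\jap{T_\varphi f,g} = \jap{\bbP(\varphi f),g} = \jap{\varphi f,g}_{L^2(\bbT)} = \int_\bbT \varphi f\,\overline{g}\,d\lambda,$$
and, since $\varphi$ is real, this is symmetric under the exchange $f\leftrightarrow g$. It follows that $[T_\varphi]_{\rm min}$ is symmetric, hence in particular $[T_\varphi]_{\rm min}\subset [T_\varphi]_{\rm min}^*$.

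For the nontrivial direction, I would prove the two inclusions $[T_\varphi]_{\rm max}\subset [T_\varphi]_{\rm min}^*$ and $[T_\varphi]_{\rm min}^*\subset [T_\varphi]_{\rm max}$. For the first, given $f\in\Dom[T_\varphi]_{\rm max}$ and a polynomial $g$, a direct computation (using that $\varphi$ is real and that $\bbP$ is the orthogonal projection onto $H^2$) gives
$$\jap{T_\varphi g,f} = \int_\bbT g\,\overline{\varphi f}\,d\lambda = \jap{g,\bbP(\varphi f)} = \jap{g,[T_\varphi]_{\rm max}f},$$
where the middle equality is valid because $g$ is a polynomial and $\bbP(\varphi f)\in H^2$ by hypothesis. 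This shows $f\in\Dom[T_\varphi]_{\rm min}^*$ with $[T_\varphi]_{\rm min}^*f=[T_\varphi]_{\rm max}f$.

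For the converse, suppose $f\in\Dom[T_\varphi]_{\rm min}^*$ and $[T_\varphi]_{\rm min}^*f=h\in H^2$. Testing against $g=z^n$, $n\geq 0$, the identity $\jap{T_\varphi z^n,f}=\jap{z^n,h}$ translates (after the same computation as above) into
$$\widehat{\varphi f}(n) = \widehat{h}(n)\quad\text{for all }n\geq 0,$$
where $\widehat{\varphi f}(n)$ is the Fourier coefficient of $\varphi f\in L^1(\bbT)$. This is exactly the statement that $\bbP(\varphi f)=h$ in the sense of distributions on $\bbT$ and that this distribution lies in $H^2$, i.e., $f\in\Dom[T_\varphi]_{\rm max}$. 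Combining both inclusions yields $[T_\varphi]_{\rm min}^*=[T_\varphi]_{\rm max}$, and the lemma follows. No serious obstacle is expected; the only delicate point is the distributional reading of $\bbP(\varphi f)$, which is handled exactly as in Section~\ref{sec.b}.
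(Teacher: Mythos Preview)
Your proof is correct and follows essentially the same route as the paper. The paper argues more economically: it only records the inclusion $[T_\varphi]_{\rm min}^*\subset [T_\varphi]_{\rm max}$ (together with the trivial symmetry inclusion $[T_\varphi]_{\rm min}\subset [T_\varphi]_{\rm min}^*$), which already suffices to conclude selfadjointness under the hypothesis $[T_\varphi]_{\rm max}=[T_\varphi]_{\rm min}$. Your additional inclusion $[T_\varphi]_{\rm max}\subset [T_\varphi]_{\rm min}^*$ yields the slightly stronger general identity $[T_\varphi]_{\rm min}^*=[T_\varphi]_{\rm max}$, but is not needed for the lemma.
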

\begin{proof}
Since the operator $[T_\varphi ]_{\rm min}$ is symmetric, we have $[T_\varphi ]_{\rm min}\subset [T_\varphi ]_{\rm min}^*$. On the other hand, it is easy to see directly that $[T_\varphi ]_{\rm min}^*\subset [T_\varphi ]_{\rm max}$. If $[T_\varphi ]_{\rm max}= [T_\varphi ]_{\rm min}$, we obtain self-adjointness. 
\end{proof}

\begin{lemma}\label{A2}
Let $\varphi \in L^2(\bbT)$ be real valued such that $[T_\varphi ]_{\rm max}= [T_\varphi ]_{\rm min}$. Then the kernel of this operator is trivial, unless $\varphi$ is identically equal to zero.
\end{lemma}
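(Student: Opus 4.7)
The plan is to assume $f\in\ker T_\varphi$ with $f\ne 0$ and derive that $\varphi=0$ almost everywhere on $\bbT$. The mechanism couples the shift $S$ on $H^2$ with the self-adjointness of $T_\varphi$ (granted by Lemma~\ref{A1}) to produce a family of orthogonality relations that I then recast as a vanishing identity for a product of two holomorphic functions on $\bbD$.

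First observe that $\varphi f\in L^1(\bbT)$ by Cauchy--Schwarz, and the assumption $\bbP(\varphi f)=0$ says $\wh{\varphi f}_m=0$ for every $m\ge 0$. For each $n\ge 1$, a direct Fourier computation gives
\[
T_\varphi(S^n f)=\bbP(z^n\varphi f)=\sum_{k=0}^{n-1}\wh{\varphi f}_{k-n}\,z^k,
\]
which is a polynomial; in particular $S^n f\in\Dom T_\varphi$. Pairing with $f$, using self-adjointness together with $T_\varphi f=0$, and reindexing $j=n-k$ yields
\[
\sum_{j=1}^{n}\wh{\varphi f}_{-j}\,\overline{\wh f_{n-j}}=0,\qquad n\ge 1.
\]

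Next, I would read these as the vanishing of the Taylor coefficients at $z^n$ ($n\ge 1$) of the product $G(z)\,f^\sharp(z)$, where
\[
G(z):=\sum_{j\ge 1}\wh{\varphi f}_{-j}\,z^j,\qquad f^\sharp(z)=\sum_{k\ge 0}\overline{\wh f_k}\,z^k.
\]
Both functions are holomorphic on $\bbD$: the Taylor coefficients of $G$ are uniformly bounded by $\norm{\varphi f}_{L^1(\bbT)}$, while $f^\sharp\in H^2$ with $\norm{f^\sharp}=\norm{f}\ne 0$. Each Taylor coefficient of $Gf^\sharp$ is a finite sum, so the displayed identities give $Gf^\sharp\equiv 0$ on the connected domain $\bbD$. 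Since $f^\sharp\not\equiv 0$, I conclude $G\equiv 0$, that is $\wh{\varphi f}_{-j}=0$ for all $j\ge 1$. Combined with the vanishing of the non-negative modes, all Fourier coefficients of the $L^1$ function $\varphi f$ vanish, so $\varphi f=0$ a.e.\ on $\bbT$. Since $\log|f|\in L^1(\bbT)$ for any non-zero $H^2$ function, $|f|>0$ a.e., and therefore $\varphi=0$ a.e.

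The point requiring care is that $\varphi f$ is only known to lie in $L^1$, so one cannot form an $L^2$ bilinear pairing against it directly. The resolution is that $T_\varphi(S^n f)$ turns out to be a polynomial of degree $<n$ in the explicit form displayed above, making $\langle T_\varphi(S^n f),f\rangle_{H^2}$ a legitimate finite sum; the self-adjointness hypothesis (i.e.\ $[T_\varphi]_{\max}=[T_\varphi]_{\min}$) then supplies exactly the symmetry needed to equate it with $\langle S^n f,T_\varphi f\rangle_{H^2}=0$.
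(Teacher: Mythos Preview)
Your proof is correct. Both your argument and the paper's start from the same observation: since $\bbP(\varphi f)=0$, each shift $S^n f$ lies in $\Dom T_\varphi$ with $T_\varphi(S^n f)$ a polynomial of degree $<n$. From there the two routes diverge. The paper argues by contradiction: assuming $\varphi\not\equiv 0$, it locates the first non-zero negative Fourier coefficient $g_N$ of $\varphi f$ and shows, by an easy induction, that every monomial $z^k$ lies in $\Ran T_\varphi$; self-adjointness then gives $\Ker T_\varphi=(\Ran T_\varphi)^\perp=\{0\}$, contradicting $f\ne 0$. You instead pair $T_\varphi(S^n f)$ against $f$ and invoke self-adjointness at once to obtain the scalar relations $\sum_{j=1}^n \wh{\varphi f}_{-j}\,\overline{\wh f_{n-j}}=0$, which you recognise as the Taylor coefficients of the product $Gf^\sharp$ on $\bbD$; the integrality of zeros of holomorphic functions then forces $G\equiv 0$, hence $\varphi f=0$ a.e. Your route is more direct and function-theoretic (no contradiction, no range-density step), while the paper's stays closer to the operator-theoretic Coburn-lemma viewpoint of showing dense range; both ultimately rest on the fact that a non-zero $H^2$ function is non-zero a.e.\ on $\bbT$.
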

\begin{proof}
We note that if $\varphi$ is bounded, the statement is a consequence of Coburn's lemma \cite[Section 4]{Coburn}.  In the general case $\varphi \in L^2(\bbT)$, we give the proof as in the original paper of Hartmann and Wintner \cite{HaWi}. Denote $T_\varphi:=[T_\varphi ]_{\rm max}$ and suppose $T_\varphi f=0$ for some $f\in\Dom T_\varphi$ and $\varphi$ is not identically zero. This means that  $\varphi f=g$ on the unit circle with some $g\in L^1(\bbT)$ such that
$$
\varphi(z)f(z)=g_1\overline{z}+g_2\overline{z}^2+g_3\overline{z}^3+\cdots;
$$
observe that $g$ is not identically zero, because $f$ is non-zero on the unit circle almost everywhere. 
Let $g_N$ be the first non-zero coefficient in the right hand side; by renormalising, we may assume $g_N=1$. Then $T_\varphi(z^Nf)=\1$, and so $\1\in\Ran T_\varphi$. Similarly, 
$$
T_\varphi(z^{N+1}f)=z+g_{N+1}\1,
$$
and so $z\in\Ran T_\varphi$. Arguing by induction, we find that $\1,z,z^2,\dots\in\Ran T_\varphi$, and therefore $\Ran T_\varphi$ is dense in $H^2$. On the other hand, by the previous Lemma, $T_\varphi$ is self-adjoint and therefore 
$(\Ran T_\varphi)^\perp=\Ker T_\varphi\not=\{0\}$; this is a contradiction. 
\end{proof}

Now we construct a real valued function $\varphi $ in $L^2(\bbT)$ such that the kernel of $[T_\varphi ]_{\rm max}$ is not trivial. By  Lemmas \ref{A1} and \ref{A2}, this implies  that $[T_\varphi ]_{\rm max}\ne [T_\varphi ]_{\rm min}$. 
\vskip 0.25cm
We fix $\varepsilon \in (0,\tfrac12)$ and  denote by $F, H$ the holomorphic functions on $\bbD$ defined by 
$$F(z)=\left ( \frac{i+z}{i-z}  \right )^{1/2}\ ,\ H(z)= [(z-i)(z+i)]^{\varepsilon /2}\ ,\ F(0)=1\ ,\ H(0)=1\ .$$
Then we define functions $\varphi_1$, $\varphi_2$ on the unit circle by 
$$
\varphi_1\left ({\rm e}^{i \theta }\right )=
\begin{cases} 
-1\ & {\rm if}\ |\theta |<\pi /2\\
1 \ & {\rm if}\ \pi/2<|\theta |<\pi 
\end{cases}
$$
and 
$$\varphi_2\left ({\rm e}^{i \theta }\right )=\frac{1}{| ( {\rm e}^{i \theta }-i)({\rm e}^{i \theta }+i)   |^\varepsilon }\ .$$
Denote $\varphi =\varphi_1\varphi_2$. It is clear that $\varphi \in L^2(\bbT)$; this will be the desired symbol.
\begin{lemma}\label{A3}
We have the following identities on $\bbT$,
\begin{eqnarray*}
-i\varphi _1F&=&1+{\rm e}^{-i \theta }\overline g_1\ ,\ g_1\in \bigcap_{p<2}H^p\ ,\\
i \frac{\varphi_1}{F}&=&1+{\rm e}^{-i \theta }\overline h_1\ ,\ h_1\in \bigcap_{p<2}H^p\ ,\\
\varphi_2H&=&1+{\rm e}^{-i \theta }\overline g_2\ ,\ g_2\in H^2\ .
\end{eqnarray*}
\end{lemma}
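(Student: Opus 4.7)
Each of the three identities asserts that a certain function on $\bbT$ equals $1+e^{-i\theta}\overline{g}$ with $g\in H^p$. Note that if $K\in H^p$ is holomorphic on $\bbD$ with $K(0)=1$, then $g(z):=(K(z)-1)/z$ lies in the same $H^p$ (the singularity at $z=0$ is removable), and on $\bbT$ one has $\overline{K(e^{i\theta})}=1+e^{-i\theta}\overline{g(e^{i\theta})}$. Hence the three claims of the lemma reduce to the following three pointwise identities on $\bbT$:
\begin{enumerate}[\rm (a)]
\item $-i\varphi_1 F=\overline{F}$, with $g_1(z)=(F(z)-1)/z$;
\item $i\varphi_1/F=\overline{1/F}$, with $h_1(z)=(1/F(z)-1)/z$;
\item $\varphi_2 H=\overline{1/H}$, with $g_2(z)=(1/H(z)-1)/z$.
\end{enumerate}

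\textbf{The three identities.} For (a), after dividing numerator and denominator by $i$, we may write $F^2(z)=(1-iz)/(1+iz)$; since $\bar z=1/z$ on $\bbT$, a direct computation gives
$$
\overline{F^2(z)}=\frac{1+i\bar z}{1-i\bar z}=\frac{z+i}{z-i}=-\frac{1-iz}{1+iz}=-F^2(z).
$$
Therefore $(\overline{F}/F)^2=-1$ on $\bbT$, and by continuity this ratio is constantly $+i$ or $-i$ on each of the two arcs $\{|\theta|<\pi/2\}$ and $\{\pi/2<|\theta|<\pi\}$ of $\bbT\setminus\{\pm i\}$. To identify the signs, one evaluates at $z=\pm 1$: tracking the principal argument of $F^2$ along the radial segments $[0,\pm 1]\subset\bbD$ gives $F(1)=e^{-i\pi/4}$ and $F(-1)=e^{i\pi/4}$, so that $\overline{F}/F=i$ on the right arc and $\overline{F}/F=-i$ on the left arc; together with the definition of $\varphi_1$ this is (a). Identity (b) follows immediately from (a) by taking reciprocals (using $\varphi_1^2\equiv 1$). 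For (c), from $H^{2/\varepsilon}(z)=z^2+1$ and the elementary identity $|e^{2i\theta}+1|^2=4\cos^2\theta$ we deduce $|H(e^{i\theta})|^2=(2|\cos\theta|)^{\varepsilon}=1/\varphi_2(e^{i\theta})$, i.e.\ $\varphi_2 H\,\overline{H}=1$ on $\bbT$, which rearranges to $\varphi_2 H=1/\overline{H}=\overline{1/H}$.

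\textbf{Regularity and main obstacle.} The function $F$ has a boundary singularity of order $1/2$ at $z=i$ (and $1/F$ at $z=-i$), so $|F(e^{i\theta})|^p$ is integrable precisely for $p<2$, giving $F,1/F\in\bigcap_{p<2}H^p$; the maps $K\mapsto(K-1)/z$ preserve this class, so $g_1,h_1\in\bigcap_{p<2}H^p$. The function $1/H(z)=(z^2+1)^{-\varepsilon/2}$ has singularities of order $\varepsilon/2$ at $z=\pm i$, and since $\varepsilon<1/2<1$, $|1/H|^2$ is integrable on $\bbT$, so $1/H$ and hence $g_2$ lie in $H^2$. The only mildly delicate point in the whole argument is the branch-tracking in step (a), used to pin down which of the two possible values $\pm i$ is taken by $\overline{F}/F$ on each arc; everything else is a short algebraic manipulation.
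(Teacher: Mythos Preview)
Your proof is correct and follows essentially the same strategy as the paper: reduce each identity to $-i\varphi_1 F=\overline F$, $i\varphi_1/F=1/\overline F$, and $\varphi_2 H\overline H=1$, then read off the claimed $H^p$ representatives from $K\mapsto (K-1)/z$. The only stylistic difference is in how the key identity $-i\varphi_1 F=\overline F$ is verified: the paper writes $F=u+iv$, uses that $F^2$ is purely imaginary on $\bbT$ to get $u^2=v^2$, and checks directly that $\varphi_1$ is the sign of $uv$, from which $-i\varphi_1 F=\overline F$ and its reciprocal drop out algebraically; you instead compute $\overline{F^2}=-F^2$ on $\bbT$ and pin down the sign of $\overline F/F$ on each arc by evaluating at $z=\pm 1$. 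Both arguments are short and equivalent; the paper's version avoids the explicit branch-tracking you flag as the ``main obstacle.''
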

\begin{proof} First observe that $F\in H^p$ for every $p<2$ and that $F^2$ is purely imaginary on the unit circle.  On $\bbT$, we define $u:={\rm Re} (F)$, $v:={\rm Im} (F)$, so that $u^2=v^2$. Furthermore, it is easy to check that $\varphi_1$ is the sign of $uv$. Consequently, we have
$$\varphi_1(v-iu)=(u-iv)\ ,\ \varphi_1(v+iu)=u+iv\ .$$
The first identity can be rephrased as 
$$-i\varphi _1F=\overline F\ .$$
In view of the expression of $F$, $$\overline F=1+{\rm e}^{-i \theta }\overline g_1\ ,\ g_1\in \bigcap_{p<2}H^p\ .$$
As for the second identity, we divide both sides by $u^2+v^2$ and obtain
$$i \frac{\varphi_1}{F}=\frac{1}{\overline F}=1+{\rm e}^{-i \theta }\overline h_1\ ,\ h_1\in \bigcap_{p<2}H^p\ . $$
Let us check the third identity. We have
$$
\varphi_2H({\rm e}^{i \theta })=\frac{[({\rm e}^{i \theta }-i)({\rm e}^{i \theta }+i)]^{\varepsilon /2}}{| ( {\rm e}^{i \theta }-i)({\rm e}^{i \theta }+i)   |^\varepsilon }
=
\frac{1}{[({\rm e}^{-i \theta }+i)({\rm e}^{-i \theta }-i)]^{\varepsilon /2}}
=
1+{\rm e}^{-i \theta }\overline g_2\ ,
$$
for some $g_2\in H^2$, as required. 
\end{proof}
Let us complete the construction. First of all, notice that 
$$
FH\ \in H^2  \quad\text{ and }\quad \frac{H}{F} \in H^2\ .
$$
We set
$$f:=FH+\frac{H}{F}\in H^2\ .$$
Then $f$ is not identically $0$ and, from Lemma \ref{A3},
\begin{eqnarray*}
 \varphi f&=& \varphi_1 F \varphi_2H+ \frac{\varphi_1}{F}\varphi_2H \\
 &=&i(1+{\rm e}^{-i \theta }\overline g_1)(1+{\rm e}^{-i \theta }\overline g_2)-i(1+{\rm e}^{-i \theta }\overline h_1)(1+{\rm e}^{-i \theta }\overline g_2)\ .
\end{eqnarray*}
 As a consequence, $\mathbb P (\varphi f)=0$ in the sense of distributions. Hence $f$ is a nontrivial vector of the kernel of $[T_\varphi]_{\rm max}$. The proof is complete.

\end{document}